\numberwithin{equation}{section}
\newtheorem{theorem}{Theorem}
\newtheorem{lemma}{Lemma}
\theoremstyle{remark}
\newtheorem{remark}{Remark}
\newcommand{\OU}{Ornstein-Uhlenbeck }
\newcommand{\R}{\mathbb{R}}
\newcommand{\overbar}[1]{\mkern 4mu\overline{\mkern-4mu#1\mkern-4mu}\mkern 4mu}
\newcommand{\Rbar}{\overbar{\R}}
\newcommand{\1}{\mathbbm{1}}
\newcommand{\vn}[1]{\left|\left|#1\right|\right|}
\newcommand{\bo}[1]{O\left(#1\right)}
\newcommand{\bth}[1]{\Theta\left(#1\right)}
\newcommand{\PP}{\mathbb{P}}
\newcommand{\de}{\,{\buildrel d \over =}\,}
\title{Join the Shortest Queue with Many Servers. \\ The Heavy Traffic Asymptotics}
\author{Patrick Eschenfeldt \\ MIT Operations Research Center \and David Gamarnik \\ MIT Sloan School of Management and Operations Research Center}
\date{}
\begin{document}

\maketitle

\abstract
{
 We consider queueing systems with $n$ parallel queues under a Join the Shortest Queue (JSQ) policy in the Halfin-Whitt heavy traffic regime. We use the martingale method to prove that a scaled process counting the number of idle servers and queues of length exactly 2 weakly converges to a two-dimensional reflected Ornstein-Uhlenbeck process, while processes counting longer queues converge to a deterministic system decaying to zero in constant time. This limiting system is comparable to that of the traditional Halfin-Whitt model, but there are key differences in the queueing behavior of the JSQ model. In particular, only a vanishing fraction of customers will have to wait, but those who do will incur a constant order waiting time. }

\section{Introduction.}
In this paper we consider queueing systems with many parallel servers under a heavy-traffic regime where the workload scales with the number of servers. Such systems are well understood when a global queue is maintained (i.e.\ $M/M/n$ and similar queues \cite{halfin-whitt}), but in many practical situations it may be advantageous to instead maintain parallel queues. Even if a global queue is itself not problematic, it may be necessary to keep queued customers close to the server who will eventually serve them. Consider for example an airport setting with arriving passengers who need to have their passports checked with one of a large number of passport controllers. In this situation having only a global queue can lead to significant walk times between the front of the queue and the server, leaving servers idle while they wait for their next customer. This idle time can be avoided by routing customers to individual queues for each server \emph{before} earlier customers finish service.

At the same time, a parallel scheme will necessarily allow servers to idle if their own queue is empty, even if customers are waiting in another queue, thus sacrificing some efficiency. To analyze this tradeoff, we will study a parallel queueing system in which each arriving customer is immediately routed to the queue containing the smallest number of customers, namely the Join the Shortest Queue (JSQ) policy. We consider the system in (Halfin-Whitt) heavy traffic by allowing the arrival rate $\lambda_n$ to depend on $n$, letting the
quantity $(1-\lambda_n)\sqrt{n}$ have a non-degenerate limit, which we denote $\beta > 0$.
Note that JSQ is a logical first step for understanding the tradeoffs involved in maintaining parallel queues, because Winston \cite{winston} proved that among policies immediately assigning customers to one of $n < \infty$ parallel queues, JSQ is optimal in the case of Poisson arrivals and exponential service times. That is, it maximizes, with respect to stochastic order, the number of customers served in a given time interval. Weber \cite{weber} extended this result to the more general class of service times with non-decreasing hazard rate, with no assumptions on the arrival process. We will consider Poisson arrivals and exponential service times, and denote this system $M/M/n$-JSQ, distinguishing it from the traditional $M/M/n$ system which maintains a global queue.

Our main result describes the behavior of processes counting the number of idle servers and of queues with one customer waiting to enter service, along with auxiliary processes to count the number of longer queues. We prove that a system that initially has a fixed maximum queue length, appropriately scaled, converges weakly to a diffusion process as $n$ approaches infinity. The coordinates of this diffusion process representing queues with more than 1 customer in service are deterministic and show that any longer queues present in the initial condition disappear in fixed time and do not form again. The coordinates corresponding to the number of idle servers and number of queues with exactly one customer waiting correspond to a two-dimensional reflected \OU process. The entire limiting system will be defined in terms of a stochastic integral equation which we prove has a unique solution. This existence and uniqueness result is stated in Theorem \ref{thm:app-int} and the weak convergence result is stated in Theorem \ref{thm:trunc-result}, which is our main result.

As a proof technique, we will introduce a truncated variant of the $M/M/n$-JSQ system in which no queues of length longer than 2 are created, though such long queues are allowed in the initial condition. This system is more easily analyzed because it is finite dimensional and has limited interaction between many of the dimensions. In this truncated system, we show that the probability the system hits the truncation barrier decreases to zero as $n$ approaches infinity, and thus in the limit the behavior of the truncated and untruncated systems are the same. 

One consequence of our result is that both the number of idle servers and the number of queues with exactly one customer waiting in the $M/M/n$-JSQ system are of the order $\bo{\sqrt{n}}$.

Another feature of interest in this queueing system is the waiting time experienced by arriving customers. We prove that in the transient system the aggregate waiting time experienced by all customers is of the order $\bo{\sqrt{n}}$, and since the number of customers arriving in that time is order $n$, the waiting time per customer is $\bo{1/\sqrt{n}}$. We also observe that any arriving customer who has to wait will incur a waiting time which is exponentially distributed with parameter 1, which is the service time of the customer in service when they enter a queue. Since any waiting customers must incur a constant order waiting time and the aggregate waiting time is $\bo{\sqrt{n}}$, the fraction of customers who end up waiting is of the order $\bo{1/\sqrt{n}}$.

Next we review prior literature. The JSQ model was initially studied in the special case of 2 queues by Haight \cite{haight}. Kingman \cite{kingman} proved stability results along with considering the stationary distribution of the system, and Flatto and McKean \cite{flatto} also examine the stationary distribution. Further work on the $n = 2$ case includes bounds on the distribution of the number of people in the system by Halfin \cite{halfin-85}.

Foschini and Salz \cite{foschini} consider diffusion limits for the heavy traffic case of the $M/M/2$-JSQ system, first proving that the queue-length processes for the two queues are identical in the limit and then deriving the limiting distribution. The limiting behavior of the waiting time is the same as the standard $M/M/2$ system in heavy traffic. Their results extend to the case of $k$ parallel queues, but they do not consider the case where the number of queues grows as the traffic intensity increases. Zhang and Wang \cite{zhang-89} and Zhang and Hsu \cite{zhang-95} look at a similar problem but drop the assumption of Poisson arrivals and exponential service times, deriving functional central limit theorems for the heavy traffic JSQ system with $s$ servers.

Thus our paper is the first study of JSQ systems in the asymptotic regime as $n \to \infty$. Observe that for fixed $\lambda < 1$ as $n$ increases the probability of any customer arriving to find all servers busy will decrease to zero. In this case the JSQ nature of the system becomes irrelevant as customers will be assigned to an idle server immediately upon arrival. In particular we see that the limiting behavior of the system will essentially be that of the $M/M/\infty$ system, and thus it is of interest to consider this model in heavy traffic with $\lambda$ approaching unity.
There has been some work on models similar to ours, most notably Tezcan \cite{tezcan}, who considers a variant of the JSQ system with multiple pools of servers who each have their own queue. He uses a state-space collapse argument based on a framework of Dai and Tezcan \cite{dai-tezcan} to prove diffusion limits under the Halfin-Whitt heavy traffic regime. In this case that regime has the number of servers and traffic intensity increasing together in the limit, but the number of pools of servers is fixed so the number of queues is also fixed. Therefore our model is similar to Tezcan's but is not a special case of it. The state-space collapse argument implies that in the limit the system can be fully described by the total number of people in the system (rather than the queue lengths in the individual pools) and the diffusion limit of that process is very similar to the original Halfin and Whitt result \cite{halfin-whitt}.

Another branch of analysis of JSQ-like queueing systems has focused on the ``supermarket model'' in which arriving customers join the shortest queue from among $d$ randomly selected queues rather than from the entire system. It was proved independently by Mitzenmacher \cite{mitzenmacher} and Vvedenskaya, Dobrushin, and Karpelevich \cite{vvedenskaya} that this system achieves an exponential improvement in expected waiting time over a system with $n$ independent $M/M/1$ queues. Versions of this system where $d$ depends on $n$ are particularly closely related to our JSQ model, which essentially sets $d = n$. Brightwell and Luczak \cite{luczak} give a set of $d$ and $\lambda$ values depending on $n$ for which they prove the steady-state system is usually in a particular state with most queues having the same (known) length. Their conditions require $(1 - \lambda)^{-1} > d$, which excludes the $d = n$, $(1 - \lambda)\sqrt{n} \to \beta$ case considered in this paper. Dieker and Suk \cite{dieker-suk} prove fluid and diffusion limits for queue length processes when when $d$ increases to infinity at a rate slower than $n$ and with fixed $\lambda < 1$.

The remainder of the paper is laid out as follows: Section \ref{sec:model} defines the model and states our main result. In Section \ref{sec:int-rep} we will prove Theorem \ref{thm:app-int}, verifying that the integral representation of the limiting system is well defined. This result will also be the key to proving convergence via a continuous mapping theorem (CMT) argument. Section \ref{sec:mart-rep} will construct a representation of the system as a combination of martingales and reflecting processes. In Section \ref{sec:mart-conv} we will establish the convergence properties of these martingales, and then apply the CMT to translate the convergence of martingales to the convergence of the scaled queue length processes. This section will conclude our proof of Theorem \ref{thm:trunc-result}. In Section \ref{sec:wait} we discuss the waiting time in the $M/M/n$-JSQ system. We will conclude in Section \ref{sec:discuss} with a brief discussion of the implications of Theorem \ref{thm:trunc-result} and possible extensions.

We use $\Rightarrow$ to denote weak convergence, $\1\{A\}$ to denote the indicator function for the event $A$, $(x)^+ = \max(x,0)$. We let $\Rbar_+ = \R_+ \cup \{\infty\}$ represent the extended positive real line. We will equip $\Rbar_+$ with the order topology, in which neighborhoods of $\infty$ are those sets which contain a subset of the form $\{x > a\}$ for some $a \in \R$. Most processes in this paper will live in the space $D = D([0,\infty),\R)$ of right continuous functions with left limits mapping $[0,\infty)$ into $\R$. We also consider $D^k = D([0,\infty),\R^k)$ for $k \ge 2$, which we will treat as the product space $D \times D \times \cdots \times D$ (see, e.g., \cite{whitt-stoch-proc} \S 3.3). We will denote the uniform norm
\[\vn{x}_t = \sup_{0\le s \le t}|x(s)|\]
for $x \in D$ and the max norm
\[\vn{(x_1,\ldots,x_k)}_t = \max_{1 \le i \le k}\vn{x_i}_t\]
for $x \in D^k$. Similarly we will use the max norm
\[|b| = \max_{1 \le i \le k} |b_i|\]
for $b \in \R^k$.

\section{The model and the main result.}\label{sec:model}

We consider a $M/M/n$-JSQ queueing system with $n$ servers where each server maintains a unique queue, with service proceeding according to the first-in-first-out discipline. Service time is exponentially distributed at each server, with the rate fixed at 1. Arrivals occur in a single stream, as a Poisson process with rate $\lambda_n n$, where $0 < \lambda_n < 1$ and
\begin{equation}\label{eq:lambda-scale}
\lim_{n \to \infty} \sqrt{n}(1 - \lambda_n) = \beta
\end{equation}
for fixed $\beta > 0$. Upon arrival, each customer is routed to the server with the shortest queue. In the event of a tie, one of the options is selected uniformly at random.

The state of the system will be represented via the process $Q^n(t) = (Q_1^n(t), Q_2^n(t),\ldots)$, with $Q_i^n(t)$ representing the number of queues with \emph{at least} $i$ customers (including any customer in service) at time $t \ge 0$. We note that for the system as described we have
\begin{equation}\label{eq:monotonicity}
n \ge Q_1^n(t) \ge Q_2^n(t) \ge \cdots \ge 0, \quad\quad \forall t \ge 0,
\end{equation}
and that we can recover the number of queues with exactly $i$ customers in service via the quantity $Q_{i}^n(t) - Q_{i+1}^n(t)$, including the number of idle servers $n - Q_1^n(t)$.

To state our weak convergence results, we also introduce a scaled version $X^n(t)$ of this process defined as
\begin{equation}\label{eq:X-def}
X_1^n(t) = \frac{Q_1^n(t) - n}{\sqrt{n}} \quad \text{ and } \quad X_i^n(t) = \frac{Q_i^n(t)}{\sqrt{n}}, \quad i \ge 2.
\end{equation}
The $i = 1$ case is treated differently because the number of queues with length 1 behaves differently than the number of queues of all larger lengths. In particular, there will be $\bo{\sqrt{n}}$ idle servers, and thus the number of servers with at least one customer in service will be order $n$.

Our diffusion limit will be the solution to a system of $k$ integral equations for some $k \ge 3$, so we first introduce this system and prove that it has a unique solution. Furthermore, we prove that the system defines a continuous map from $\Rbar_+ \times \R^k \times D^k$ to $D^k \times D^2$ with respect to appropriate topologies. This continuity, along with the further fact that the function maps continuous functions to continuous functions, allows us to use the CMT to prove weak convergence once we show the weak convergence of the arguments.

\begin{theorem}\label{thm:app-int}
Given integer $k \ge 3$, $B \in \Rbar_+$, $b \in \R^k$, and $y \in D^k$, consider the following system:
\begin{align}
x_1(t) &= b_1 + y_1(t) + \int_0^t(-x_1(s) + x_2(s))ds - u_1(t), \label{eq:int-app-1}\\
x_2(t) &= b_2 + y_2(t) + \int_0^t(-x_2(s) + x_3(s))ds + u_1(t) - u_2(t), \label{eq:int-app-2}\\
x_i(t) &= b_i + y_i(t) + \int_0^t(-x_i(s) + x_{i+1}(s))ds,\quad 3 \le i \le k-1, \\
x_k(t) &= b_k + y_k(t) + \int_0^t-x_k(s)ds, \\
x_1(t) &\le 0, \quad 0 \le x_2(t) \le B, \quad x_i(t) \ge 0, \quad t \ge 0, \label{eq:int-app-3}
\end{align}
with $u_1$ and $u_2$ nondecreasing nonnegative functions in $D$ such that
\begin{align*}
\int_0^\infty \1\{x_1(t) < 0\}du_1(t) &= 0, \\
\int_0^\infty \1\{x_2(t) < B\}du_2(t) &= 0.
\end{align*}
Then \eqref{eq:int-app-1}-\eqref{eq:int-app-3} has a unique solution $(x,u) \in D^k \times D^2$ so that there is a well defined function $(f,g):\Rbar_+ \times \R^k \times D^k\to D^k \times D^2$ mapping $(B,b,y)$ into $x = f(B,b,y)$ and $u = g(B,b,y)$.
Furthermore, the function $(f,g)$ is continuous on $\Rbar_+ \times \R^k \times D^k$ with respect to the product topology when $\Rbar_+$ is equipped with the order topology and $D$ is equipped with the topology of uniform convergence over bounded intervals. Finally, if $y$ is continuous, then so are $x$ and $u$.
\end{theorem}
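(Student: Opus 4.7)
The plan is to decouple the system: first solve the unreflected coordinates $x_k,\ldots,x_3$ by backward recursion, then handle the coupled reflected pair $(x_1,x_2)$ via a Picard contraction.

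For the backward step, $x_k$ satisfies the linear inhomogeneous ODE $dx_k=-x_k\,dt+dy_k$ with initial value $b_k$, which by variation of parameters admits the Riemann--Stieltjes representation $x_k(t)=e^{-t}b_k+\int_0^t e^{-(t-s)}\,dy_k(s)$, well defined for $y_k\in D$ via integration by parts. Inductively, for $i$ descending from $k-1$ to $3$, one obtains $x_i$ from $x_{i+1}$ by the same formula with forcing $y_i(s)+\int_0^s x_{i+1}(r)\,dr$. Each step defines $x_i\in D$ uniquely, the map $(b_i,y_i,x_{i+1})\mapsto x_i$ is Lipschitz in the uniform-on-compacts topology, and continuity of $y_i$ is inherited by $x_i$.

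With $x_3,\ldots,x_k$ fixed, write $\tilde b_1:=b_1+y_1$ and $\tilde b_2:=b_2+y_2+\int_0^\cdot x_3(s)\,ds$, both known elements of $D$. I set up a Picard iteration on $D^2([0,T])$: given $(\xi_1,\xi_2)$, form the free trajectories $z_1=\tilde b_1+\int_0^\cdot(-\xi_1+\xi_2)\,ds$ and $z_2=\tilde b_2-\int_0^\cdot \xi_2\,ds$; then set $u_1(t)=\sup_{s\le t}(z_1(s))^+$ and $x_1=z_1-u_1$ (one-sided upper-barrier Skorokhod map at $0$), and $u_2(t)=\sup_{s\le t}(z_2(s)+u_1(s)-B)^+$ and $x_2=z_2+u_1-u_2$ (same map at $B$, vacuous when $B=\infty$). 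The $1$-Lipschitz property of the one-sided Skorokhod map in uniform norm, together with the $\bo{T}$ Lipschitz constant of the drift integrals, makes this map a contraction on $D^2([0,T])$ for small $T$; concatenation yields the unique global solution and thereby defines $(f,g)$. One separately verifies that the constraint $x_2\ge 0$ is maintained, using that the drift $-x_2+x_3$ is nonnegative at $x_2=0$ and that $u_1$ can only push $x_2$ upward.

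Continuity of $(f,g)$ in $(B,b,y)$ follows by propagating the Lipschitz constants at each stage: the backward recursion is continuous in $(b,y)$, and the Picard fixed point depends continuously on its data, including $B$ via the elementary continuity of $z\mapsto z-\sup_{\le\cdot}(z-B)^+$ in the barrier. For $B_n\to\infty$ in the order topology of $\Rbar_+$, on any compact $[0,T]$ one eventually has $B_n>\vn{z_2+u_1}_T$, so $u_2\equiv 0$ on $[0,T]$ and the solutions match the $B=\infty$ case. Preservation of continuity is immediate from the explicit formulas and the fact that the one-sided Skorokhod map sends $C$ into $C$. The main obstacle is the two-way coupling in the $(x_1,x_2)$ subsystem: $u_1$ is driven by the state of $x_1$ yet simultaneously forces $x_2$, whose drift feeds back into $x_1$ through the $x_2$ term. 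Disentangling this coupling via the contraction argument, and verifying that $x_2\ge 0$ is automatically maintained without requiring a third reflecting process, is the technical heart of the proof.
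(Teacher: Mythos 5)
Your proposal follows essentially the same strategy as the paper: first decouple and solve the unreflected coordinates $x_k,\dots,x_3$ by backward recursion, then handle the reflected pair $(x_1,x_2)$ by a Picard--Banach contraction on a short horizon and concatenate, and finally propagate Lipschitz bounds (through the Skorokhod map and the drift integrals) to obtain continuity of $(f,g)$. Two implementation details differ. For the higher dimensions, the paper invokes a ready-made one-dimensional existence/uniqueness/continuity lemma (\cite[Theorem 4.1]{martingale}) and argues by induction, whereas you write the explicit variation-of-parameters solution; both work. For the $(x_1,x_2)$ pair, the paper's contraction acts on an auxiliary \emph{pre-reflection} pair $(\tilde w_1,\tilde w_2)$ satisfying \eqref{eq:alt-w-1}--\eqref{eq:alt-w-2}, with the reflection maps $\phi_0,\phi_B,\psi_0$ appearing inside the drift, and then recovers $(x_1,u_1,x_2,u_2)$ by applying $(\phi,\psi)$; your contraction acts directly on the post-reflection pair $(x_1,x_2)$, building $z_1,z_2$, then $u_1=\psi_0(z_1)$, $u_2=\psi_B(z_2+u_1)$. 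The two fixed-point equations are equivalent at the fixed point, and the Lipschitz estimates you invoke (regulator is $1$-Lipschitz, reflected process is $2$-Lipschitz, drift integral contributes $O(T)$) give the same $O(T)$-Lipschitz map; your description ``$1$-Lipschitz property of the one-sided Skorokhod map'' is slightly loose but the argument holds. You also handle $B=\infty$ and global uniqueness by concatenating local contractions, where the paper uses its Gronwall lemma; these are interchangeable. Finally, you flag that the lower constraint $x_2\ge 0$ is not enforced by a third regulator; your justification (drift nonnegative at $x_2=0$, $u_1$ nondecreasing, $u_2$ inactive away from $B$) tacitly assumes that $y_2$ does not itself drive $x_2$ negative, so it is not a complete argument for arbitrary $y\in D^k$---but the paper has the same gap in its abstract statement and also does not verify this constraint except implicitly through the application, so this is not a defect unique to your approach.
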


We will prove this theorem in Section \ref{sec:int-rep}. One implication of Theorem \ref{thm:app-int} is that the limiting system we find in our main result below is well defined because, as we will see, it is an application of the function $(f,g)$ with specific arguments $b,y$, and $B = \infty$ augmented with $X_i(t) = 0$ for $i > k$. Note that $B = \infty$ implies $u_2 = 0$. Our main result is the following:

\begin{theorem} \label{thm:trunc-result}
In the sequence of $M/M/n$-JSQ models described above, suppose there exists $k$ and a random vector $X(0) = (X_1(0), \ldots, X_k(0)) \in \R^k$ such that
\begin{equation}\label{eq:trunc-initial}
X_i^n(0) \Rightarrow X_i(0) \quad \text{ in } \R \text{ as } n \to \infty, \quad 1 \le i \le k,
\end{equation}
and $X_i^n(0) = 0$ for $i > k$.
Then for any $t \ge 0$,
\[X_i^n \Rightarrow X_i \quad \text{ in } D \text{ as } n \to \infty, \quad i \ge 1,\]
where $X_1\le 0$ and $X_i\ge 0$ for $i \ge 2$ are unique solutions in $D$ of the stochastic integral equations
\begin{align}
X_1(t) &= X_1(0) + \sqrt{2}W(t) - \beta t + \int_0^t\left(-X_1(s) + X_2(s)\right)ds - U_1(t), \label{eq:trunc-limit-1}\\
X_2(t) &= X_2(0) + U_1(t) + \int_0^t(-X_2(s) + X_3(s))ds, \label{eq:trunc-limit-2} \\
X_i(t) &= X_i(0) + \int_0^t(-X_i(s) + X_{i+1}(s))ds, \quad 3 \le i \le k - 1, \label{eq:trunc-limit-i}\\
X_k(t) &= X_k(0) + \int_0^t-X_k(s)ds, \label{eq:trunc-limit-k}\\
X_i(t) &= 0, \quad i \ge k + 1,
\end{align}
where $W$ is a standard Brownian motion and $U_1$ is the unique nondecreasing nonnegative process in $D$ satisfying
\begin{align}
\int_0^\infty\1\{X_1(t) < 0\}dU_1(t) &= 0. \label{eq:trunc-reflection}
\end{align}

\end{theorem}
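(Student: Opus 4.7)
I would combine a martingale representation with Theorem \ref{thm:app-int} via the continuous mapping theorem (CMT), using a truncation device to handle the upper boundary. Specifically, I would first introduce a modified $M/M/n$-JSQ system in which arrivals creating a queue of length exceeding $2$ are suppressed, so $Q_2^n \le n$, equivalently $X_2^n \le \sqrt{n} =: B_n$, is automatic (initial queues of length $>2$ are still allowed to drain via service). The truncated state then fits the framework of Theorem \ref{thm:app-int} with finite $B = B_n$. Because $B_n \to \infty$, the continuity of $(f,g)$ in $B$ at $\infty$ (where $u_2 = 0$) will deliver the $B=\infty$ limit, and a separate tightness argument shows the truncation barrier is hit with vanishing probability, so the truncated and untruncated $M/M/n$-JSQ processes coincide on any fixed interval with probability tending to one.

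\textbf{Martingale decomposition.} Decompose $Q_i^n = Q_i^n(0) + A_i^n - D_i^n$, where $A_i^n$ counts arrivals contributing to $Q_i^n$ with stochastic intensity $\lambda_n n\,\1\{Q_{i-1}^n = n,\, Q_i^n < n\}$ (setting $Q_0^n := n$) and $D_i^n$ has intensity $Q_i^n - Q_{i+1}^n$. Centering by $n$ for $i=1$, dividing by $\sqrt{n}$, and using $\sqrt{n}(1-\lambda_n) \to \beta$, the compensator of $A_1^n - D_1^n$ produces the drift $\int_0^t(-\beta - X_1^n(s) + X_2^n(s))\,ds$ minus the nonnegative, nondecreasing process
\[U_1^n(t) := \lambda_n \sqrt{n}\int_0^t \1\{X_1^n(s) = 0\}\,ds,\]
which is supported on $\{X_1^n = 0\}$. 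For $i=2$ the compensator of $A_2^n$ contributes exactly $+U_1^n$ (since $Q_2^n < n$ in the truncated regime) while $D_2^n$ supplies $\int_0^t(-X_2^n+X_3^n)\,ds$; for $3 \le i \le k-1$ the arrival intensity vanishes in the truncated system and only the service drift $-X_i^n+X_{i+1}^n$ remains; for $i = k$ the convention $X_{k+1}^n = 0$ gives $-X_k^n$. Setting
\[y_1^n(t) := \frac{M_1^{n,A}(t) - M_1^{n,D}(t)}{\sqrt{n}} - \beta t, \qquad y_i^n(t) := \frac{M_i^{n,A}(t) - M_i^{n,D}(t)}{\sqrt{n}}, \quad i \ge 2,\]
the truncated scaled state satisfies $X^n = f(B_n, X^n(0), y^n)$ and $U^n = g(B_n, X^n(0), y^n)$ in the notation of Theorem \ref{thm:app-int}.

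\textbf{Martingale convergence, CMT, and the main obstacle.} Since arrivals and services are independent Poisson streams, the compensated processes $M_i^{n,A}$ and $M_j^{n,D}$ are orthogonal martingales. The predictable quadratic variation of $y_1^n$ equals
\[\frac{1}{n}\int_0^t\bigl(\lambda_n n\,\1\{Q_1^n(s)<n\} + Q_1^n(s) - Q_2^n(s)\bigr)\,ds \to 2t\]
uniformly on compacts, once one establishes the crude bounds $Q_1^n/n \to 1$ and $Q_2^n/n \to 0$; the martingale FCLT then gives $y_1^n \Rightarrow \sqrt{2}\,W - \beta t$. For $i \ge 2$ the predictable quadratic variations are $O(1/\sqrt{n})$, so $y_i^n \Rightarrow 0$. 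Combined with the hypothesis $X^n(0) \Rightarrow X(0)$ and $B_n \to \infty$ in the order topology, this gives $(B_n, X^n(0), y^n) \Rightarrow (\infty, X(0), \sqrt{2}\,W - \beta t, 0, \ldots, 0)$ in the product topology of Theorem \ref{thm:app-int}. Applying the CMT to the continuous map $(f,g)$ then yields $X^n \Rightarrow X$ and $U_1^n \Rightarrow U_1$ solving \eqref{eq:trunc-limit-1}--\eqref{eq:trunc-reflection}; the coordinates $X_i^n$ for $i > k$ remain identically zero under the truncation and trivially converge to zero. The main technical obstacle is driving $B_n \to \infty$ while simultaneously forcing the probability of hitting the truncation to vanish: this requires an a priori tightness bound keeping $X_2^n$ stochastically $O(1)$ on compact intervals, derived from the restoring drift $-X_2^n$, the tightness of $U_1^n$, and the exponential decay of the longer-queue coordinates $X_i^n$ ($i \ge 3$) from their finite initial data.
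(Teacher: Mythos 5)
Your truncation device, CMT framework, use of Theorem \ref{thm:app-int}, and argument that the truncation barrier is hit with vanishing probability all coincide with the paper's. The genuine difference is in the martingale decomposition, and it introduces a gap.

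The paper represents $\hat{Q}_1^n$ via the \emph{full} time-changed Poisson arrival stream $A(\lambda_n n t)$ and defines the regulator $\hat{U}_1^n(t)=\int_0^t\1\{\hat{Q}_1^n(s)=n\}\,dA(\lambda_n n s)$ as the actual count of overflowing arrivals; see \eqref{eq:app-pois-1} and \eqref{eq:trunc-u1}. With this choice the arrival martingale $\hat{M}_0^n$ has the \emph{deterministic} quadratic variation $\lambda_n t$ (see \eqref{eq:trunc-qv-1}), and the FCLT is applied after a deterministic time change $\Phi_{A,n}(t)=\lambda_n t$. You instead decompose $Q_1^n$ using state-dependent stochastic intensities $\lambda_n n\,\1\{Q_1^n<n\}$ and replace $\hat{U}_1^n$ by its compensator $U_1^n(t)=\lambda_n\sqrt{n}\int_0^t\1\{X_1^n(s)=0\}\,ds$. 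Both are legitimate regulators supported on $\{X_1^n=0\}$, and by the uniqueness in Lemma \ref{thm:trunc-int-low} each is \emph{the} regulator for its own driving term $y_1^n$. But the two driving terms differ by a martingale whose quadratic variation is $\lambda_n\int_0^t\1\{Q_1^n(s)=n\}\,ds$, and this is exactly the source of the gap in your proof: your claim that
\begin{align*}
\frac{1}{n}\int_0^t\bigl(\lambda_n n\,\1\{Q_1^n(s)<n\} + Q_1^n(s) - Q_2^n(s)\bigr)\,ds \;\longrightarrow\; 2t
\end{align*}
requires $\int_0^t\1\{Q_1^n(s)=n\}\,ds \to 0$, which does \emph{not} follow from the fluid-level facts $Q_1^n/n\to 1$ and $Q_2^n/n\to 0$ that you invoke. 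The fluid limit $Q_1^n/n\to 1$ is entirely consistent with $Q_1^n$ spending an order-one amount of Lebesgue time exactly at $n$. (By contrast, $\int_0^t Q_2^n(s)\,ds /n \to 0$ really does follow from the fluid limit for $Q_2^n$, because that term is an integral of the process rather than of an indicator of a boundary hit.) Without this occupation-time estimate, the quadratic variation of your $y_1^n$ is only known to lie in $[t,2t]$ asymptotically, and the CMT argument does not produce the variance coefficient $\sqrt{2}$ in \eqref{eq:trunc-limit-1}.

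The gap is fixable within your framework, but it requires an argument the paper never needs. For example, one can observe that the difference martingale $N_1^n/\sqrt{n}=\hat{V}_1^n - U_1^n$ has quadratic variation $\lambda_n\int_0^t\1\{X_1^n=0\}\,ds\le \lambda_n t$ and is therefore stochastically bounded in $D$; combined with the stochastic boundedness of $\hat{V}_1^n$ established via Gronwall (Lemma \ref{thm:stoch-bound}), this forces $U_1^n=\lambda_n\sqrt{n}\int_0^t\1\{X_1^n=0\}\,ds$ to be stochastically bounded, hence $\lambda_n\int_0^t\1\{X_1^n(s)=0\}\,ds = O_P(n^{-1/2})\to 0$. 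You should add such an argument explicitly, or switch to the paper's decomposition, where the quadratic variation of the arrival martingale is deterministic and the issue never arises.
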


\begin{remark}
The integral equations \eqref{eq:trunc-limit-i}-\eqref{eq:trunc-limit-k} are deterministic and have an explicit solution:
\begin{align*}
X_i(t) &= e^{-t}\left(X_i(0) + \sum_{j = 1}^{k-i}\frac{1}{j!}t^jX_{i+j}(0)\right), \quad 3 \le i \le k - 1, \\
X_k(t) &= X_k(0) e^{-t}.
\end{align*}
Thus the number of queues of length at least $i$ for $i \ge 3$ decays exponentially in time.
\end{remark}

We note that condition \eqref{eq:trunc-initial} does place significant but not unreasonable restrictions on the starting state of the finite systems $Q^n$. In particular, $Q_1^n(0) - n = \bo{\sqrt{n}}$ so the number of customers initially in service must be sufficiently near $n$. Similarly, \eqref{eq:trunc-initial} requires $Q_2^n(0) = \bo{\sqrt{n}}$ and therefore $Q_i^n(0) = \bo{\sqrt{n}}$ for $3 \le i \le k$. Also note that the longest queue allowed in the initial condition has length $k$.

Our result shows that the $M/M/n$-JSQ system in the heavy traffic limit becomes essentially a two-dimensional system. If queues with more than one customer waiting are present initially, they disappear and do not form again. There are $\bo{\sqrt{n}}$ idle servers and $\bo{\sqrt{n}}$ queues with exactly one customer, and the behavior of processes counting these correspond to a two-dimensional \OU process.

\section{Integral representation.}\label{sec:int-rep}
We will now prove Theorem \ref{thm:app-int}, showing that the representation of the limiting system in Theorem \ref{thm:trunc-result} is a valid and unique representation. We will also show that it defines a continuous map from $\Rbar_+ \times \R^k \times D^k$ to $D^k\times D^2$. The continuity of the map in the topology of uniform convergence over bounded intervals will allow us to use the continuous mapping theorem (CMT) to demonstrate the convergence $X^n_i \Rightarrow X_i$ once we write $X^n_i$ in the appropriate integral form.

Note that by using $\Rbar_+$ in the domain of this map we allow the upper barrier $B$ for the function $x_2$ to take the value $\infty$, which corresponds to there being no upper barrier on the $\sqrt{n}$ scale.

Our approach to proving Theorem \ref{thm:app-int} will involve two main lemmas, one dealing with the first two dimensions, where reflection plays an important role, and a one dimensional lemma that we will apply to the higher dimensions.

\subsection{Lower dimensions.}
To deal with the reflection terms in the first two dimensions of Theorem \ref{thm:app-int} it is convenient to consider those dimensions completely decoupled from the rest of the system. To that end we will prove the following:
\begin{lemma}\label{thm:trunc-int-low}
Given $B \in \Rbar_+$, $b \in \R^2$, and $y \in D^2$, consider
\begin{align}
x_1(t) &= b_1 + y_1(t) + \int_0^t(-x_1(s) + x_2(s))ds - u_1(t), \label{eq:int-trunc-1}\\
x_2(t) &= b_2 + y_2(t) + \int_0^t(-x_2(s))ds + u_1(t) - u_2(t), \label{eq:int-trunc-2}\\
x_1(t) &\le 0, \quad 0 \le x_2(t) \le B, \quad t \ge 0, \label{eq:int-trunc-3}
\end{align}
with $u_1$ and $u_2$ nondecreasing nonnegative functions in $D$ such that
\begin{align*}
\int_0^\infty \1\{x_1(t) < 0\}du_1(t) &= 0, \\
\int_0^\infty \1\{x_2(t) < B\}du_2(t) &= 0.
\end{align*}
Then \eqref{eq:int-trunc-1}-\eqref{eq:int-trunc-3} has a unique solution $(x,u) \in D^2 \times D^2$ so that there is a well defined function $(f,g):\Rbar_+ \times \R^2 \times D^2\to D^2 \times D^2$ mapping $(B,b,y)$ into $x = f(B,b,y)$ and $u = g(B,b,y)$.
Furthermore, the function $(f,g)$ is continuous. Finally, if $y$ is continuous, then so are $x$ and $u$.
\end{lemma}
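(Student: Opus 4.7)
The plan is to solve the coupled reflected system by a Picard-type fixed-point argument built on the classical one-dimensional Skorokhod reflection map. Observe that if $x_2$ were prescribed, then \eqref{eq:int-trunc-1} is a one-dimensional Skorokhod problem with upper reflection at $0$ for the input $b_1+y_1(\cdot)+\int_0^\cdot(-x_1(s)+x_2(s))\,ds$ with unknowns $(x_1,u_1)$; if additionally $u_1$ is prescribed, then \eqref{eq:int-trunc-2} is a one-dimensional Skorokhod problem with upper reflection at $B$ for input $b_2+y_2(\cdot)+\int_0^\cdot(-x_2(s))\,ds+u_1(\cdot)$ with unknowns $(x_2,u_2)$. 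Both of these one-dimensional reflections admit an explicit formula of the form $u(t)=\sup_{0\le s\le t}(\psi(s)-\text{barrier})^+$ and are $2$-Lipschitz in the uniform-on-compacts topology, with constant independent of the barrier location.

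To make the coupling rigorous, I would define an operator $\mathcal{T}:D^2\to D^2$ by: given $(\tilde x_1,\tilde x_2)$, set $\psi_1(t)=b_1+y_1(t)+\int_0^t(-\tilde x_1(s)+\tilde x_2(s))\,ds$ and let $(x_1,u_1)$ be the upper-$0$ reflection of $\psi_1$; then set $\psi_2(t)=b_2+y_2(t)+\int_0^t(-\tilde x_2(s))\,ds+u_1(t)$ and let $(x_2,u_2)$ be the upper-$B$ reflection of $\psi_2$; finally put $\mathcal{T}(\tilde x_1,\tilde x_2)=(x_1,x_2)$. Fixed points of $\mathcal{T}$ are exactly the solutions of \eqref{eq:int-trunc-1}--\eqref{eq:int-trunc-3}. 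If $\tilde x$ and $\tilde x'$ differ by at most $\epsilon$ in uniform norm on $[0,T_0]$, then $\vn{\psi_1-\psi_1'}_{T_0}\le 2T_0\epsilon$; the Lipschitz property of the upper reflection gives $\vn{x_1-x_1'}_{T_0}+\vn{u_1-u_1'}_{T_0}\le CT_0\epsilon$ for a universal $C$; propagating the same estimate through the second stage yields $\vn{\mathcal{T}\tilde x-\mathcal{T}\tilde x'}_{T_0}\le C'T_0\epsilon$. Choosing $T_0>0$ small enough makes $\mathcal{T}$ a contraction on $(D^2,\vn{\cdot}_{T_0})$, so Banach's fixed-point theorem yields a unique fixed point on $[0,T_0]$; iterating with reinitialization at $T_0,2T_0,\ldots$ gives the unique solution on $[0,\infty)$.

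Continuity of $(f,g)$ in $(B,b,y)$ then follows because each step of the construction is Lipschitz in $(b,y)$ (and in $B$ for $B<\infty$) on bounded intervals, and the contraction estimate propagates this Lipschitz dependence to the fixed point. For the case $B=\infty$ under the order topology on $\Rbar_+$, the key observation is that the $B=\infty$ solution has $x_2$ bounded on every compact interval $[0,T]$ by some $M_T<\infty$ (obtainable by Gronwall applied to the unreflected equation for $x_2$); consequently, for $B_n\to\infty$ and inputs converging uniformly on $[0,T]$, the solutions for $B_n$ and $B=\infty$ agree on $[0,T]$ once $B_n>M_T$, since the upper reflection is then never activated. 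Preservation of continuity from $y$ to $(x,u)$ is immediate because both the Skorokhod map and the integrals in the construction send $C$ into $C$. The main obstacle is precisely the $B=\infty$ case: unlike the finite-$B$ regime where joint Lipschitz estimates are available, here one must first obtain the a priori bound $M_T$ on $x_2$ so as to justify $u_2\equiv 0$ eventually, and this must be done carefully to handle joint perturbations of $B$ and $y$ in the order topology.
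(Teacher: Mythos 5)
Your proposal is correct in its essentials, and it reaches the result by a genuinely different decomposition than the paper. The paper first passes to an \emph{unreflected} auxiliary system in $w=(w_1,w_2)$ in which the maps $\phi_0,\phi_B,\psi_0$ appear \emph{inside} the integrands (Lemma~\ref{thm:app-int-unrefl}), performs a further substitution $\tilde w_2 = w_2 - \psi_0(\tilde w_1)$ to remove the term $\psi_0(w_1(t))$ from the second equation, runs a contraction on $\tilde w$, and only at the end recovers $(x,u)$ via $x_1=\phi_0(w_1),\ u_1=\psi_0(w_1),\ x_2=\phi_B(w_2),\ u_2=\psi_B(w_2)$. You instead set up the contraction directly on the reflected pair $(x_1,x_2)$: given $(\tilde x_1,\tilde x_2)$, form the free path $\psi_1$, apply the Skorokhod map at level $0$ to get $(x_1,u_1)$, feed $u_1$ into the construction of $\psi_2$, apply the Skorokhod map at level $B$, and return $(x_1,x_2)$. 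Tracing through the Lipschitz constants (constant $2$ for $\phi_\kappa$, constant $1$ for $\psi_\kappa$) gives a contraction factor of order $T_0$, and uniqueness of the one-dimensional Skorokhod problem guarantees that every solution of the original system is a fixed point, which makes the contraction-mapping argument deliver both existence and uniqueness at once. Both approaches then chain across $[0,T_0],[T_0,2T_0],\ldots$ and rely on Gronwall-type estimates for continuity of the solution map. What the paper's extra change of variables buys is a clean modular structure: continuity of $(f,g)$ reduces to continuity of $\xi$ composed with the extended reflection map $(\phi_\kappa,\psi_\kappa)$, and Lemma~\ref{thm:reflection} handles $\kappa=\infty$ once and for all. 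Your direct approach spares the auxiliary variables, at the cost of having to argue continuity of the whole iteration scheme ``by hand.'' On the point you flag as the main obstacle: you are right that the $B=\infty$ case needs an a priori bound on $x_2$ over compacts so that the reflection at $B_n$ is eventually inactive, and that joint perturbation of $(B,y)$ requires care. The paper faces exactly the same issue (its Gronwall step for continuity uses $\vn{\phi_{B^n}(w_2)-\phi_B(w_2)}_t\to 0$, which needs $\vn{w_2}_t<\infty$), and resolves it by deferring the required a priori bound to Lemma~\ref{thm:stoch-bound} and invoking Lemma~\ref{thm:reflection}; so your concern is genuine but not specific to your route, and your proposed fix (a Gronwall bound $M_T$, then $u_2\equiv 0$ for $B_n>M_T$, carried out uniformly over perturbed inputs) is the right one.
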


\subsubsection{The reflection map.}\label{sec:ref-map}
In several places we will make use of the well known one-dimensional reflection map for an upper barrier. Given upper barrier $\kappa \in \R_+$, we let $(\phi_\kappa, \psi_\kappa) : D \to D^2$ be the one-sided reflection map with upper barrier at $\kappa$ (see, e.g., \cite{whitt-stoch-proc} \S 5.2 and \S 13.5). In particular for $x \in D$ with $x(0) \le \kappa$ we have $z = \psi_{\kappa}(y) \ge 0$, $z$ nondecreasing,
\[x = \phi_{\kappa}(y) = y - z \le \kappa,\]
and
\[\int_0^\infty \1\{x < \kappa\} dz = 0.\]
Recall that these functions can be defined explicitly by
\begin{equation}\label{eq:regulator}
\psi_\kappa(x)(t) = \sup_{0\le s \le t}(x(s) - \kappa)^+
\end{equation}
and
\begin{equation}\label{eq:reflected}
\phi_\kappa(x)(t) = x(t) - \psi_\kappa(x)(t).
\end{equation}
We will also make use of a slight variant of the usual Lipschitz condition for these functions to allow for different values of $\kappa$. In particular, for $x,x' \in D$, $\kappa, \kappa' \in \R$, and $t \ge 0$ we have
\begin{align}
\vn{\psi_{\kappa}(x)-\psi_{\kappa'}(x')}_t \le \vn{x-x'}_t + |\kappa-\kappa'|, \label{eq:lips-reg} \\
\vn{\phi_{\kappa}(x)-\phi_{\kappa'}(x')}_t \le 2\vn{x-x'}_t + |\kappa-\kappa'|. \label{eq:lips-ref}
\end{align}
These follow straightforwardly from \eqref{eq:regulator} and \eqref{eq:reflected}. Note that for $\kappa = \kappa'$ we recover the usual Lipschitz constants of $1$ for $\psi_\kappa$ and $2$ for $\phi_\kappa$.

We also define a trivial reflection map for $\kappa = \infty$ by letting $(\phi_\infty,\psi_\infty) = (e,0)$ where $e$ is the identity map. That is, the reflection map leaves the argument unchanged and the regulator is identically zero. We prove the following:

\begin{lemma}\label{thm:reflection}
The function $(\phi,\psi) : \Rbar_+ \times D \to D^2$ defined by \eqref{eq:regulator}-\eqref{eq:reflected} for finite $\kappa$ and by $(\phi_\infty,\psi_\infty) = (e,0)$ for $\kappa = \infty$ is continuous with respect to the product topology when $\Rbar_+$ is equipped with the order topology and $D$ is equipped with the topology of uniform convergence over bounded intervals.
\end{lemma}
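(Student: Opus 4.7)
The plan is to check continuity pointwise on $\Rbar_+ \times D$, splitting the argument according to whether the limit value of $\kappa$ is finite or infinite.

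At any finite limit $\kappa \in \R_+$, continuity is essentially given to us by the Lipschitz estimates \eqref{eq:lips-reg}--\eqref{eq:lips-ref}. The order topology on $\Rbar_+$ agrees with the usual topology on $\R_+$, so convergence $(\kappa_n, x_n) \to (\kappa, x)$ in the product topology reduces to the conjunction of $|\kappa_n - \kappa| \to 0$ and $\vn{x_n - x}_t \to 0$ for every $t \ge 0$. Substituting these into
\[\vn{\psi_{\kappa_n}(x_n) - \psi_\kappa(x)}_t \le \vn{x_n - x}_t + |\kappa_n - \kappa|\]
and the analogous bound for $\phi$ (with constant $2$) gives uniform convergence on every compact time interval, which is exactly the required continuity at $(\kappa, x)$.

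The only case that genuinely needs a separate argument is $\kappa = \infty$. Fix $t \ge 0$ and a sequence $(\kappa_n, x_n) \to (\infty, x)$. Since $x_n \to x$ uniformly on $[0,t]$, the $x_n$ are eventually uniformly bounded there, say $\sup_n \vn{x_n}_t \le M$ for some $M < \infty$. In the order topology on $\Rbar_+$ the set $\{\kappa > M\}$ is a neighborhood of $\infty$, so $\kappa_n > M \ge x_n(s)$ for all $s \in [0,t]$ once $n$ is large. For such $n$, the explicit formulas \eqref{eq:regulator}--\eqref{eq:reflected} give $(x_n(s) - \kappa_n)^+ = 0$ throughout $[0,t]$, so $\psi_{\kappa_n}(x_n) \equiv 0$ and $\phi_{\kappa_n}(x_n) \equiv x_n$ on $[0,t]$. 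Therefore
\[\vn{\psi_{\kappa_n}(x_n) - \psi_\infty(x)}_t = 0, \qquad \vn{\phi_{\kappa_n}(x_n) - \phi_\infty(x)}_t = \vn{x_n - x}_t \to 0,\]
matching the definition $(\phi_\infty, \psi_\infty) = (e, 0)$.

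I do not anticipate any substantive obstacle. The only point worth emphasizing is that the order topology on $\Rbar_+$ is precisely the structure that makes the $\kappa = \infty$ case go through: it encodes ``$\kappa_n \to \infty$'' as ``$\kappa_n$ eventually exceeds any fixed bound,'' which is exactly what is needed to make the upper barrier inactive on a prescribed compact time window once $\{x_n\}$ has been bounded there using the uniform convergence $x_n \to x$.
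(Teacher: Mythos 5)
Your proof is correct and follows essentially the same approach as the paper's: Lipschitz estimates handle finite $\kappa$, and the boundedness of $\vn{x}_t$ handles $\kappa = \infty$. Your version of the $\kappa = \infty$ case is in fact slightly cleaner, since choosing a uniform bound $M$ and showing $\psi_{\kappa_n}(x_n)$ is \emph{exactly} zero on $[0,t]$ for large $n$ avoids the implicit interchange of $\sup$ and $\lim$ in the paper's calculation.
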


\begin{proof}
By \eqref{eq:lips-reg}-\eqref{eq:lips-ref} the function is continuous at any finite $\kappa \in \R_+$. For $x \in D$ and $x^\kappa \in D$ such that $x^\kappa \to x$ as $\kappa \to \infty$,
\begin{align*}
\lim_{\kappa \to \infty}\vn{\psi_{\kappa}(x^\kappa)}_t &= \lim_{\kappa \to \infty}\sup_{0 \le s \le t}|\psi_{\kappa}(x^\kappa)| \\
&= \lim_{\kappa \to \infty}\sup_{0 \le s \le t}(x^\kappa(s) - \kappa)^+ \\
&= \sup_{0 \le s \le t}\lim_{\kappa \to \infty}(x^\kappa(s) - \kappa)^+ \\
&= 0,
\end{align*}
where we have made use of the fact that $\vn{x}_t < \infty$. Therefore $\psi_\kappa(x^\kappa) \to \psi_{\infty}(x)$ and by \eqref{eq:reflected} we conclude $\phi_{\kappa}(x^\kappa) \to \phi_\infty(x)$. Thus the function is continuous at $\kappa = \infty$, completing the proof. 
\end{proof}

With these facts about the reflection map in hand, we will now prove a result similar to Lemma \ref{thm:trunc-int-low} for a related system:
\begin{lemma}\label{thm:app-int-unrefl}
Given $B \in \Rbar_+$, $b \in \R^2$ and $y \in D^2$, consider
\begin{align}
w_1(t) &= b_1 + y_1(t) + \int_0^t\left(-\phi_0(w_1(s)) + \phi_B(w_2(s))\right)ds, \label{eq:w-1}\\
w_2(t) &= b_2 + y_2(t) + \psi_0(w_1(t)) + \int_0^t\left(-\phi_B(w_2(s))\right)ds \ge 0. \label{eq:w-2}
\end{align}
Then \eqref{eq:w-1}-\eqref{eq:w-2} has a unique solution 
$w \in D^2$ so that there is a well defined function $\xi:\Rbar_+ \times \R^2 \times D^2 \to D^2$ mapping $(B,b,y)$ into $w = \xi(B,b,y)$.  Furthermore, the function $\xi$ is continuous. Finally, if $y$ is continuous, then so is $w$.
\end{lemma}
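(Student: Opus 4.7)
The plan is to prove existence and uniqueness of $w$ by recasting the system as a fixed-point problem and applying a contraction-mapping argument in a weighted supremum norm, and then to deduce continuity of $\xi$ and preservation of continuity from the same estimates. Define an operator $T=(T_1,T_2):D^2\to D^2$ by letting $(T(w))_i(t)$ be the right-hand side of \eqref{eq:w-1}--\eqref{eq:w-2}; a solution is exactly a fixed point of $T$. Using \eqref{eq:lips-reg}--\eqref{eq:lips-ref} (Lipschitz constant $2$ for $\phi_0,\phi_B$ and $1$ for $\psi_0$) I would derive
\begin{align*}
\|T_1(w)-T_1(w')\|_t &\le 2\int_0^t\bigl(\|w_1-w'_1\|_s+\|w_2-w'_2\|_s\bigr)ds,\\
\|T_2(w)-T_2(w')\|_t &\le \|w_1-w'_1\|_t+2\int_0^t\|w_2-w'_2\|_s\,ds.
\end{align*}
The term $\|w_1-w'_1\|_t$ in the second bound, produced by the un-integrated factor $\psi_0(w_1(t))$, is the main obstacle: it has no time-shrinking prefactor, so a naive sup-norm contraction on small intervals fails and iterating $T$ does not help in a straightforward way.

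To bypass this, fix $T>0$ and introduce the weighted norm
\[
\|w\|^{*} := \max\!\Bigl(\sup_{0\le s\le T}e^{-\gamma s}|w_1(s)|,\; c\sup_{0\le s\le T}e^{-\gamma s}|w_2(s)|\Bigr),
\]
for constants $c\in(0,1)$ and $\gamma>0$ to be tuned. Multiplying the bounds above by $e^{-\gamma t}$ and using $e^{-\gamma t}\int_0^t e^{\gamma s}ds\le 1/\gamma$, I would obtain
\[
\|T(w)-T(w')\|^{*} \le \max\!\Bigl(\tfrac{2(1+1/c)}{\gamma},\; c+\tfrac{2}{\gamma}\Bigr)\,\|w-w'\|^{*}.
\]
Choosing e.g.\ $c=1/2$ and $\gamma>6$ makes the right-hand constant strictly less than $1$. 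Since this contraction rate is independent of $T$, the Banach fixed-point theorem on $D([0,T],\R^2)$ yields a unique solution $w$ on every bounded interval, and these solutions patch together consistently into a unique $w\in D^2$ on $[0,\infty)$.

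For continuity of $\xi:\overbar{\R}_+\times\R^2\times D^2\to D^2$, I would use a standard perturbation argument: if $w=\xi(B,b,y)$ and $w'=\xi(B',b',y')$, then writing $w-w'=T_{B,b,y}(w)-T_{B',b',y'}(w')$, the triangle inequality together with the contraction estimate above gives
\[
(1-\rho)\|w-w'\|^{*}\le \|T_{B,b,y}(w')-T_{B',b',y'}(w')\|^{*},
\]
and the right-hand side is controlled in terms of $|b-b'|$, $\|y-y'\|_T$, and, via \eqref{eq:lips-ref} and Lemma~\ref{thm:reflection} (which handles the delicate case $B,B'\to\infty$ in the order topology), a term that vanishes as $B'\to B$. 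This yields continuity on $\overbar{\R}_+\times\R^2\times D^2$.

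Finally, preservation of continuity is immediate from the construction: if $y$ is continuous, then each Picard iterate $T^n(w^{(0)})$ starting from any continuous $w^{(0)}$ is continuous, since Lebesgue integration, composition with the continuous maps $\phi_0,\phi_B$, and the regulator $\psi_0$ all send continuous functions to continuous functions; the uniform limit on each $[0,T]$ is therefore continuous. The hardest step will be Step~2: identifying the right weighted norm that simultaneously tames the $\psi_0(w_1(t))$ term (pushing it into the cross-coupling coefficient $c$) and the integrated terms (shrunk by $1/\gamma$), so that a single choice of $(c,\gamma)$ makes $T$ a contraction uniformly over all time horizons.
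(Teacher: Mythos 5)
Your proof is correct, and it takes a genuinely different route from the paper. The paper handles the troublesome non-integrated term $\psi_0(w_1(t))$ in \eqref{eq:w-2} by a change of variables $\tilde{w}_2 = w_2 - \psi_0(w_1)$, which pushes that term inside the argument of $\phi_B$ in the integrand; after that, a standard short-time contraction (Lipschitz constant $5t$, so $t_0 < 1/5$) gives existence patch-by-patch, while uniqueness and continuity of $\xi$ are established separately via the two-function Gronwall inequality (Lemma~\ref{thm:gronwall}), and preservation of continuity relies on a boundedness estimate borrowed from the later Lemma~\ref{thm:stoch-bound}. You instead keep the original system and design an asymmetric weighted norm $\|w\|^{*} = \max\bigl(\sup_s e^{-\gamma s}|w_1(s)|,\; c\,\sup_s e^{-\gamma s}|w_2(s)|\bigr)$: the exponential weight controls the integral terms (factor $1/\gamma$), while the cross-weight $c<1$ discounts the un-integrated $\|w_1-w_1'\|_t$ term appearing in the $T_2$ estimate so that it contributes only $c\,\|w-w'\|^{*}$, yielding a contraction constant $\max\bigl(2(1+1/c)/\gamma,\; c + 2/\gamma\bigr) < 1$ uniformly over the horizon $T$. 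Your estimates check out (e.g.\ with $c=1/2$, $\gamma>6$), and since this route gets existence and uniqueness simultaneously from Banach's theorem, it dispenses with the separate Gronwall uniqueness argument; the perturbation bound $(1-\rho)\|w-w'\|^{*} \le \|T_{B,b,y}(w')-T_{B',b',y'}(w')\|^{*}$ cleanly delivers continuity of $\xi$ (with Lemma~\ref{thm:reflection} still needed for $B\to\infty$); and preservation of continuity of $w$ falls out of the Picard iterates, avoiding the paper's forward dependence on the boundedness result of Lemma~\ref{thm:stoch-bound}. One should note explicitly that $D([0,T],\mathbb{R}^2)$ is complete under the uniform (hence under the equivalent weighted) norm, and that the fixed points on nested intervals agree by uniqueness, but these are routine. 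Overall this is a cleaner and more self-contained argument.
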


Before proceeding with the proof we introduce a version of Gronwall's inequality first proved by Greene \cite{greene} and proved in the form we use by Das \cite{das}:

\begin{lemma}[Gronwall's inequality]\label{thm:gronwall}
Let $K_1$ and $K_2$ be nonnegative constants, let $h_i$ be real constants, and let $f,g$ be continuous nonnegative functions for all $t \ge 0$ such that
\begin{align*}
f(t) &\le K_1 + h_1\int_0^tf(s)ds + h_2\int_0^tg(s)ds, \\
g(t) &\le K_2 + h_3\int_0^tf(s)ds + h_4\int_0^tg(s)ds
\end{align*}
for all $t \ge 0$. Then
\[f(t) \le Me^{ht} \quad\text{ and }\quad g(t) \le Me^{ht}\]
for all $t \ge 0$ where $M = K_1 + K_2$ and $h = \max\{h_1+h_3,h_2+h_4\}$. In particular, if $K_1,K_2 = 0$, then $f(t),g(t) = 0$ for all $t$.
\end{lemma}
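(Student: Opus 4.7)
The plan is to reduce this two-dimensional system of integral inequalities to the classical one-dimensional Gronwall inequality by summing. Set $F(t) = f(t) + g(t)$. Adding the two hypothesized inequalities yields
\[
F(t) \le (K_1+K_2) + (h_1+h_3)\int_0^t f(s)\,ds + (h_2+h_4)\int_0^t g(s)\,ds.
\]
The key elementary observation is that, because $f(s), g(s) \ge 0$ and $h \ge h_1+h_3$ and $h \ge h_2+h_4$ by definition of the maximum, the pointwise bound $(h_1+h_3)f(s) + (h_2+h_4)g(s) \le h\bigl(f(s)+g(s)\bigr)$ holds regardless of the sign of any individual $h_i$. Substituting gives $F(t) \le M + h\int_0^t F(s)\,ds$.

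From here I would invoke the standard scalar Gronwall argument: with $G(t) = \int_0^t F(s)\,ds$, one has $G'(t) = F(t) \le M + hG(t)$ (using the continuity of $f, g$, hence of $F$), so $(e^{-ht}G(t))' \le Me^{-ht}$; integrating over $[0,t]$ and rearranging produces $F(t) \le Me^{ht}$. Since $f(t), g(t) \ge 0$, each is dominated by $F(t)$, yielding the claimed bounds $f(t), g(t) \le Me^{ht}$. The degenerate case $K_1 = K_2 = 0$ is then immediate: $M = 0$ forces $F \equiv 0$ and hence $f \equiv g \equiv 0$.

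I do not anticipate a serious obstacle here, as the lemma is essentially a packaging of the scalar Gronwall inequality for a coupled pair. The only mildly subtle point is verifying that the coefficient-domination step goes through without any sign assumptions on the $h_i$, which it does precisely because we first sum the two inequalities before bounding coefficients (summing converts the bound into one involving $F$, at which point only the scalar $h$ matters). A second minor point is that the standard Gronwall integration step produces the clean bound $Me^{ht}$ when $h \ge 0$; if $h$ happens to be negative the inequality $F(t) \le M$ obtained trivially from $h\int_0^t F \le 0$ already suffices and one can simply replace $h$ by $\max(h,0)$ without loss. Thus the proof should be a short and direct computation once the summing step is recognized as the right reduction.
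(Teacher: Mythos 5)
The paper does not prove this lemma; it is imported as a black box from Greene and Das, so there is no in-paper proof to compare against. That said, your add-the-two-inequalities reduction to the scalar Gronwall inequality is the natural proof of exactly this packaged form (note that $M = K_1+K_2$ and $h = \max\{h_1+h_3,\,h_2+h_4\}$ are precisely what the summing gives), and the coefficient-domination step $(h_1+h_3)f + (h_2+h_4)g \le h(f+g)$ is valid for signed $h_i$ because $f,g \ge 0$. For $h \ge 0$ your integrating-factor step cleanly yields $F(t) \le Me^{ht}$, and $h \ge 0$ holds in every invocation in the paper (the $h_i$ there are $1$, $2$, or $4$). The special case $K_1=K_2=0 \Rightarrow f=g\equiv 0$ also falls out immediately.

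There is, however, a real problem with your final parenthetical. Replacing $h$ by $\max(h,0)$ is \emph{not} ``without loss'': when $h<0$ the bound $F(t)\le M$ is strictly weaker than the claimed $F(t)\le Me^{ht}$ for $t>0$, so you have not proved the statement as written. In fact the statement as written is \emph{false} for $h<0$, so no repair along these lines is possible. Take $h_1=h_4=-1$, $h_2=h_3=0$, $K_1=K_2=1$, so $M=2$ and $h=-1$. The two inequalities decouple into $f(t)\le 1-\int_0^t f$ and $g(t)\le 1-\int_0^t g$. Let $f=g$ be $0$ on $[0,4]$, rise linearly to $0.1$ on $[4,5]$, stay at $0.1$ on $[5,10]$, fall linearly to $0$ on $[10,11]$, and be $0$ thereafter; then $\int_0^\infty f = 0.6$ so $1-\int_0^t f \ge 0.4 > f(t)$ for all $t$, and the hypotheses hold, yet $f(5)=0.1 > 2e^{-5}\approx 0.013$. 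The missing hypothesis is $h\ge 0$ (equivalently, nonnegativity of the column sums, which is automatic whenever the $h_i$ themselves are nonnegative, as in every use in this paper). You should state that hypothesis explicitly and drop the ``without loss'' remark; once that is done, your argument is complete and correct.
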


\begin{proof}[Proof of Lemma \ref{thm:app-int-unrefl}.]
We will show existence via a contraction mapping argument. First we will show that for $t \ge 0$ there exists a solution $\tilde{w} = (\tilde{w}_1,\tilde{w}_2)$ to the system of integral equations
\begin{align}
\tilde{w}_1(t) &= b_1 + y_1(t) + \int_0^t\left(-\phi_0(\tilde{w}_1(s)) + \phi_B\big(\tilde{w}_2(s) + \psi_0(\tilde{w}_1(s))\big)\right)ds, \label{eq:alt-w-1}\\
\tilde{w}_2(t) &= b_2 + y_2(t) + \int_0^t\left(-\phi_B\big(\tilde{w}_2(s)+\psi_0(\tilde{w}_1(s))\big)\right)ds \ge 0. \label{eq:alt-w-2}
\end{align}
Once we have such a solution, it follows immediately that
\[w = (w_1,w_2) = \left(\tilde{w}_1, \tilde{w}_2 + \psi_0(\tilde{w}_1)\right)\]
is a solution to \eqref{eq:w-1}-\eqref{eq:w-2}.

We first show that the map defined by the right hand side of \eqref{eq:alt-w-1}-\eqref{eq:alt-w-2} is a contraction for small enough $t$. We define $T: D^2 \to D^2$ by
\begin{align}
T(\tilde{w})_1(t) &= b_1 + y_1(t) + \int_0^t\left(-\phi_0(\tilde{w}_1(s)) + \phi_B\big(\tilde{w}_2(s) + \psi_0(\tilde{w}_1(s))\big)\right)ds, \label{eq:T-1}\\
T(\tilde{w})_2(t) &= b_2 + y_2(t) + \int_0^t\left(-\phi_B\big(\tilde{w}_2(s)+\psi_0(\tilde{w}_1(s))\big)\right)ds. \label{eq:T-2}
\end{align}
For $\tilde{w},\tilde{v} \in D^2$ we have
\begin{align*}
\vn{T(\tilde{w})_1-T(\tilde{v})_1}_t &\le \int_0^t\vn{-\phi_0(\tilde{w}_1) +\phi_0(\tilde{v}_1)}_sds \\
&\quad\quad + \int_0^t\vn{\phi_B(\tilde{w}_2+\psi_0(\tilde{w}_1)) - \phi_B(\tilde{v}_2 + \psi_0(\tilde{v}_1))}_sds \\
&\le 2\int_0^t\vn{\tilde{w}_1 - \tilde{v}_1}_sds \\
&\quad\quad+ 2\int_0^t\vn{\tilde{w}_2 +\psi_0(\tilde{w}_1) - \tilde{v}_2 - \psi_0(\tilde{v}_1)}_sds \\
&\le 2t\vn{\tilde{w}_1 - \tilde{v}_1}_t + 2t\vn{\tilde{w}_2 - \tilde{v}_2}_t + \int_0^t\vn{\tilde{w}_1 - \tilde{v}_1}_sds \\
&\le 2t\vn{\tilde{w}_1 - \tilde{v}_1}_t + 2t\vn{\tilde{w}_2 - \tilde{v}_2}_t + t\vn{\tilde{w}_1 - \tilde{v}_1}_t \\
&\le 5t\vn{\tilde{w}-\tilde{v}}_t
\end{align*}
and
\begin{align*}
\vn{\tilde{w}_2 - \tilde{v}_2}_t &\le \int_0^t\vn{-\phi_B(\tilde{w}_2+\psi_0(\tilde{w}_1)) + \phi_B(\tilde{v}_2 + \psi_0(\tilde{v}_1))}_sds \\
&\le 2t\vn{\tilde{w}_2 + \psi_0(\tilde{w}_1)-\tilde{v}_2 - \psi_0(\tilde{v}_1)}_t \\
&\le 2t\vn{\tilde{w}_2-\tilde{v}_2}_t + t\vn{\tilde{w}_1-\tilde{v}_1}_t \\
&\le 3t\vn{\tilde{w} - \tilde{v}}_t.
\end{align*}
We therefore conclude that
\[\vn{T(\tilde{w}) - T(\tilde{v})}_t \le 5t \vn{\tilde{w}-\tilde{v}}_t,\]
so for $t_0 < \frac{1}{5}$, $T$ is a contraction on $D([0,t_0],\R^2)$. Therefore by the contraction mapping principle (see, e.g., \cite[p.220]{rudin}), $T$ has a unique fixed point $\tilde{w}$ on $D([0,t_0],\R^2)$ such that $T(\tilde{w}) = \tilde{w}$. This fixed point solves \eqref{eq:alt-w-1}-\eqref{eq:alt-w-2} for $t \in [0,t_0]$. Now we extend the fixed point argument to $t \in [t_0,2t_0], [2t_0,3t_0], \ldots$ and repeat to find a solution $\tilde{w}$ to \eqref{eq:alt-w-1}-\eqref{eq:alt-w-2} for $t \ge 0$. As noted above, this provides a solution $w$ to \eqref{eq:w-1}-\eqref{eq:w-2}.

To prove uniqueness of this solution, suppose $w$ and $w'$ are two solutions to \eqref{eq:w-1}-\eqref{eq:w-2}. We consider
\begin{align}
\vn{w_1-w_1'}_t &\le \int_0^t\vn{-\phi_0(w_1) + \phi_0(w_1') +\phi_B(w_2)-\phi_B(w_2')}_sds \notag \\
&\le 2 \int_0^t\left(\vn{w_1 - w_1'}_s + \vn{w_2 - w_2'}_s\right)ds \label{eq:uniq-bound-1}
\end{align}
and
\begin{align}
\vn{w_2-w_2'}_t &\le \vn{\psi_0(w_1) - \psi_0(w_1')}_t + \int_0^t\vn{\phi_B(w_2)-\phi_B(w_2')}_sds  \notag\\
&\le \vn{w_1 - w_1'}_t + 2\int_0^t\vn{w_2-w_2'}_sds. \label{eq:uniq-bound-2}
\end{align}
To match the form of Gronwall's inequality (Lemma \ref{thm:gronwall})  we rewrite \eqref{eq:uniq-bound-2} as
\[\vn{w_2-w_2'}_t - \vn{w_1-w_1'}_t \le 2\int_0^t\vn{w_2-w_2'}_sds\]
and note that the right hand side is nonnegative so the inequality remains true as
\begin{equation}\label{eq:uniq-bound-3}
\left(\vn{w_2-w_2'}_t - \vn{w_1-w_1'}_t\right)^+ \le 2\int_0^t\vn{w_2-w_2'}_sds.
\end{equation}

We now define
\begin{align*}
u_1(t) &= \vn{w_1 - w_1'}_t, \\
u_2(t) &= \left(\vn{w_2-w_2'}_t - \vn{w_1-w_1'}_t\right)^+
\end{align*}
and note
\begin{equation}\label{eq:uniq-aux}
\vn{w_2-w_2'}_s \le u_2(s) + u_1(s) \quad s \ge 0.
\end{equation}
Then \eqref{eq:uniq-bound-1}, \eqref{eq:uniq-bound-3}, and \eqref{eq:uniq-aux} imply
\begin{align*}
u_1(t) \le 4\int_0^tu_1(s)ds + 2\int_0^tu_2(s)ds, \\
u_2(t) \le 2\int_0^tu_1(s)ds + 2\int_0^tu_2(s)ds.
\end{align*}
Now by Gronwall's inequality we have
\begin{align*}
u_1(t) = 0 \quad \text{ and } \quad u_2(t) = 0,
\end{align*}
so by the definition of $u_1$ and \eqref{eq:uniq-aux} we have
\[\vn{w_1-w_1'}_t = \vn{w_2-w_2'}_t = 0\]
for all $t \ge 0$ and therefore the solution $w$ is unique.

We now establish the continuity of $\xi$. Suppose 
\[(B^n,b^n,y^n) \to (B,b,y) \quad \text{ as } n \to \infty.\]
Fix $\epsilon > 0$ and suppose $w^n$ and $w$ satisfy \eqref{eq:w-1}-\eqref{eq:w-2} for $(B^n,b^n,y^n)$ and $(B,b,y)$, respectively. Choose $N$ such that for all $n \ge N$,
\[|b^n - b| + \vn{y^n - y}_t + \vn{\phi_{B^n}(w_2) - \phi_{B}(w_2)}_t < \delta\]
for some $\delta > 0$ which is yet to be determined. Note that such an $N$ exists by Lemma \ref{thm:reflection} and the assumption $B^n \to B$. We have
\begin{align}
\vn{w^n_1-w_1}_t &\le |b^n-b| + \vn{y^n-y}_t \notag\\
&+ \int_0^t\vn{-\phi_0(w_1^n) + \phi_0(w_1) +\phi_{B^n}(w_2^n)-\phi_{B}(w_2)}_sds \notag \\
&\le \delta + \int_0^t\big(2\vn{w_1^n-w_1}_s + \vn{\phi_{B^n}(w_2^n)-\phi_{B^n}(w_2)}_s  \notag\\
&\quad\quad\quad\quad\quad + \vn{\phi_{B^n}(w_2) - \phi_B(w_2)}_s\big) ds \notag\\
&\le \delta + \int_0^t\left(2\vn{w_1^n - w_1}_s + 2\vn{w_2^n - w_2}_s + \delta\right)ds  \notag\\
&\le \delta(1+t) + 2 \int_0^t\left(\vn{w_1^n - w_1}_s + \vn{w_2^n - w_2}_s\right)ds \label{eq:unif-bound-1}
\end{align}
and
\begin{align}
\vn{w_2^n-w_2}_t \le \delta(1+t) + \vn{w_1^n-w_1}_t + 2\int_0^t\vn{w_2^n-w_2}_sds \label{eq:unif-bound-2}.
\end{align}

As in the uniqueness argument above, we will apply Gronwall's inequality, with functions
\[u_1(t) = \vn{w_1^n-w_1}_t \quad\text{ and }\quad u_2(t) = \left(\vn{w_2^n-w_2}_t - \vn{w_1^n-w_1}_t\right)^+.\]
Then we have
\begin{align*}
u_1(t) \le \delta(1+t) + 4\int_0^tu_1(s)ds + 2\int_0^tu_2(s)ds, \\
u_2(t) \le \delta(1+t) + 2\int_0^tu_1(s)ds + 2\int_0^tu_2(s)ds,
\end{align*}
so Gronwall's inequality implies
\[u_1(t) \le 2\delta(1+t) e^{6t} \quad \text{ and } \quad u_2(t) \le 2\delta(1+t) e^{6t}\]
and we have
\[\vn{w_1^n - w_1}_t \le 2\delta(1+t) e^{6t} \quad \text{ and } \quad \vn{w_2^n - w_2}_t \le 4\delta(1+t) e^{6t}.\]
We choose $\delta = \frac{1}{4+4t} \epsilon e^{-6t}$ to establish the desired continuity.

For the proof of continuity of $w$ we note
\[|w_1(t + s) - w_1(t)| \le |y_1(t+s) - y_1(t)| + \int_t^{t+s}|\phi_B(w_2(z)) - \phi_0(w_1(z))|dz\]
and
\[|w_2(t + s) - w_2(t)| \le |y_2(t+s) - y_2(t)| + |w_1(t+s) - w_1(t)| + \int_t^{t+s}|\phi_B(w_2(z))|dz\]
The boundedness of $w_1$ and $w_2$ proved in Lemma \ref{thm:stoch-bound} imply that $w_1$ and $w_2$ are continuous if $y_1$ and $y_2$ are continuous. 
\end{proof}

We are now prepared to prove Lemma \ref{thm:trunc-int-low}.

\begin{proof}[Proof of Lemma \ref{thm:trunc-int-low}.]
Our key insight is to see that a solution is found by setting $x_1 = \phi_0(w_1)$, $u_1 = \psi_0(w_1)$, $x_2 = \phi_B(w_2)$, and $u_2 = \psi_B(w_2)$  where $(w_1,w_2)$ is the unique solution defined by Lemma \ref{thm:app-int-unrefl}.

To see that it is unique, note that the conditions on $u_1$ and $u_2$ imply that they can be written as $\psi_0(z_1)$ and $\psi_B(z_2)$ for some functions $z_1,z_2 \in D$. Then $x_1$ and $x_2$ are $\phi_0(z_1)$ and $\phi_B(z_2)$ for the same $z_1$ and $z_2$. Then \eqref{eq:int-app-1}-\eqref{eq:int-app-3} imply that $z = (z_1,z_2)$ must be a solution of \eqref{eq:w-1}-\eqref{eq:w-2}. By Lemma \ref{thm:app-int-unrefl} this solution is unique. In particular, this solution is
\begin{align*}
x_1 = f_1(b,y) &= (\phi_0 \circ \xi_1)(b,y), \\
u_1 = g_1(b,y) &= (\psi_0 \circ \xi_1)(b,y), \\
x_2 = f_2(b,y) &= (\phi_B \circ \xi_2)(b,y), \\
u_2 = g_2(b,y) &= (\psi_B \circ \xi_2)(b,y).
\end{align*}
The reflection maps $(\phi_0,\psi_0)$ and $(\phi_B,\psi_B)$ are continuous in the uniform topology and also preserve continuity. Since $\xi$ also has these properties by Lemma \ref{thm:app-int-unrefl}, we conclude that $(f,g)$ are continuous and preserve continuity. 
\end{proof}

\subsection{Higher dimensions.}
Because of the lack of reflection terms for $i \ge 3$, the higher dimensional terms in Theorem \ref{thm:app-int} behave in primarily one-dimensional ways. Therefore we will note an existence, uniqueness, and continuity result for a one-dimensional system that we will use in our proof of Theorem \ref{thm:app-int}. This lemma is a special case of a slightly more general result proved by Pang, Talreja, and Whitt \cite[Theorem 4.1]{martingale}, namely letting $h(x) = -x$.

\begin{lemma}\label{thm:int-one}
Given $b \in \R$, and $y \in D$, consider
\begin{align}
x(t) &= b + y(t) + \int_0^t-x(s)ds \label{eq:int-one-1}\\
x(t) &\ge 0, \quad t \ge 0 \label{eq:int-one-2}
\end{align}

Then \eqref{eq:int-one-1}-\eqref{eq:int-one-2} has a unique solution $x \in D$ so that there is a well defined function $f:\R \times D \to D$ mapping $(b,y)$ into $x = f(b,y)$. Furthermore, the function $(f,g)$ is continuous. Finally, if $y$ is continuous, then so is $x$.
\end{lemma}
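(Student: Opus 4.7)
The plan is to prove Lemma \ref{thm:int-one} by a standard contraction mapping argument for existence together with a Gronwall estimate for uniqueness and continuity. The structure mirrors the proof of Lemma \ref{thm:app-int-unrefl} but is substantially simpler since no reflection terms are present.

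For existence on a small interval $[0,t_0]$, I would define the operator $T : D([0,t_0],\R) \to D([0,t_0],\R)$ by
\[(Tx)(t) = b + y(t) - \int_0^t x(s)\,ds.\]
For any $x,v \in D([0,t_0],\R)$,
\[\vn{Tx - Tv}_{t_0} \le \int_0^{t_0}\vn{x-v}_s\,ds \le t_0\vn{x - v}_{t_0},\]
so $T$ is a strict contraction whenever $t_0 < 1$. Banach's fixed point theorem then yields a unique fixed point, which is the desired solution on $[0,t_0]$. Iterating the argument on $[t_0,2t_0], [2t_0,3t_0],\ldots$ extends the solution to all of $[0,\infty)$ and defines $f(b,y)$.

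For global uniqueness, any two solutions $x$ and $x'$ satisfy $\vn{x-x'}_t \le \int_0^t \vn{x-x'}_s\,ds$, and one-dimensional Gronwall gives $x\equiv x'$. For continuity of $f$, suppose $(b_n,y_n)\to(b,y)$ with corresponding solutions $x_n$ and $x$; subtracting the integral equations yields
\[\vn{x_n - x}_t \le |b_n - b| + \vn{y_n - y}_t + \int_0^t \vn{x_n - x}_s\,ds,\]
and Gronwall delivers $\vn{x_n - x}_t \le (|b_n - b| + \vn{y_n - y}_t)e^t$, which is precisely continuity in the topology of uniform convergence on bounded intervals. If $y$ is continuous, then the bound $|x(t+s) - x(t)| \le |y(t+s) - y(t)| + s\vn{x}_{t+s}$ (obtained directly from the integral equation) makes $x$ continuous as well.

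No step presents a real obstacle, consistent with the authors' observation that the lemma is already a special case of Pang, Talreja, and Whitt's Theorem 4.1 with $h(x)=-x$. The nonnegativity constraint $x(t)\ge 0$ plays no role in existence or uniqueness; it is an auxiliary property that can be verified from the explicit linear-ODE representation $x(t) = be^{-t} + y(t) - \int_0^t e^{-(t-s)}y(s)\,ds$ whenever the inputs $(b,y)$ arising in the applications of the lemma make it so.
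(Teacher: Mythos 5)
Your proof is correct. The paper itself gives no argument for this lemma: it simply cites Pang, Talreja, and Whitt~\cite[Theorem 4.1]{martingale} (taking $h(x)=-x$), whereas you supply a self-contained argument using the same contraction-mapping-plus-Gronwall machinery that the authors deploy for Lemma~\ref{thm:app-int-unrefl}. That is a perfectly reasonable, slightly more elementary route, and nothing in it breaks: $T$ is a contraction on $D([0,t_0],\R)$ for $t_0<1$ and the iteration extends it; your Gronwall bound $\vn{x_n-x}_t\le(|b_n-b|+\vn{y_n-y}_t)e^t$ gives continuity in the uniform-on-compacts topology; and the modulus-of-continuity bound $|x(t+s)-x(t)|\le|y(t+s)-y(t)|+s\vn{x}_{t+s}$ handles preservation of continuity. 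Your closed-form expression $x(t)=be^{-t}+y(t)-\int_0^t e^{-(t-s)}y(s)\,ds$ is also correct and goes a bit beyond what the cited result provides. Two small remarks: first, the reference to ``$(f,g)$'' in the lemma statement is a copy-paste artifact from the two-dimensional lemmas; there is no regulator $g$ here, and you correctly treat only $f$. Second, you are right that the nonnegativity constraint~\eqref{eq:int-one-2} is not needed for, nor produced by, the map $f$; it is an a posteriori property of the particular $(b,y)$ fed into the lemma when it is used (the $x_{i+1}\ge 0$ part of $\hat y$ together with $b\ge 0$), so flagging it as outside the existence/uniqueness/continuity machinery is the correct reading, and matches how the paper and the cited reference treat it.
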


With this lemma in hand, we can prove Theorem \ref{thm:app-int}:

\begin{proof}[Proof of Theorem \ref{thm:app-int}.]
We first prove existence, uniqueness, and preservation of continuity by induction on $k \ge i \ge 3$.

As the base case, observe that Lemma \ref{thm:int-one} implies there exists a unique solution $x_k(t)$ which preserves continuity of $y_k(t)$.

As an induction hypothesis, suppose for some $i \ge 3$ there exist unique solutions $x_{i+1}(t), \ldots, x_{k}(t)$ which are continuous if $y_{i+1}, \ldots, y_k$ are continuous, and consider $x_i(t)$.

We have the integral equation
\begin{align*}
x_i(t) &= b_i + y_i(t) + \int_0^t(-x_i(s)+x_{i+1}(s))ds \\
 &= b_i + y_i(t) + \int_0^tx_{i+1}(s)ds + \int_0^t-x_i(s)ds.
\end{align*}
By the induction hypothesis $x_{i+1}(t)$ exists and is unique, so we let
\[\hat{y}(t) = y_i(t) + \int_0^tx_{i+1}(s)ds\]
and apply Lemma \ref{thm:int-one} with $y = \hat{y}$ to conclude that there exists a unique solution $x_i(t)$. This solution is continuous if $y_i, \ldots, y_k$ are continuous because in that case that $\hat{y}(t)$ is continuous.

By induction, we conclude that there exist unique solutions $x_3, \ldots, x_k$ which preserve continuity of $y_3, \ldots, y_k$. From this, we can define
\[\hat{y}_2(t) = y_2(t) + \int_0^tx_{3}(s)ds\]
and apply Lemma \ref{thm:trunc-int-low} with $y_2 = \hat{y}_2$ to complete the proof of existence, uniqueness, and preservation of continuity.

To verify that the map $(f,g)$ is continuous, suppose $x^n(t)$ and $x(t)$ solve \eqref{eq:int-app-1}-\eqref{eq:int-app-3} for $(B^n,b^n,y^n)$ and $(B,b,y)$, respectively, and further suppose $(B^n,b^n,y^n) \to (B,b,y)$. 

We again proceed by induction on $k \ge i \ge 3$.

For the base case note Lemma \ref{thm:int-one} implies $x_k^n \to x_k$.

As an induction hypothesis, suppose for some $i \ge 3$ we have $x^n_{j} \to x_j$ for $i \le j \le k$.

By the induction hypothesis we have
\[y^n_{i}(t) + \int_0^tx^n_{i+1}(s)ds \to y_{i}(t) + \int_0^tx_{i+1}(s)ds,\]
and thus, again by Lemma \ref{thm:int-one}, $x_{i}^n \to x_{i}$. 

By induction we conclude $x_i^n \to x_i$ for $3 \le i \le n$. This further implies
\[y^n_{2}(t) + \int_0^tx^n_{3}(s)ds \to y_{2}(t) + \int_0^tx_{3}(s)ds,\]
so Lemma \ref{thm:trunc-int-low} implies $(x_1^n,x_2^n) \to (x_1,x_2)$, and continuity is established.
\end{proof}

\section{Truncation and martingale representation.}\label{sec:mart-rep}
To use Theorem \ref{thm:app-int} in a CMT argument, we want to write the process $X^n$ in the appropriate integral form. Instead of directly considering the full $M/M/n$-JSQ system, however, we will introduce a truncated variant which we will later show has the same behavior as $X^n$ in the limit. It is this truncated system that will be shown to take the integral form of Theorem \ref{thm:app-int}. 

\subsection{Truncation.}
An important feature of the behavior of the $M/M/n$-JSQ system is that queues with more than one customer waiting are formed only if all queues have at least one customer waiting already. One of our goals is to show that $Q_2^n$, the number of queues with a customer waiting, is of the order $\bo{\sqrt{n}}$ and thus it is unlikely for longer queues to form. With this in mind, as a proof technique, we now introduce a truncated version of the system in which no queues with length greater than 2 are created, though they are allowed to exist in the initial condition. This system will be significantly easier to analyze and will be shown to have stochastic behavior exactly matching the untruncated system with high probability. See Figure \ref{fig:full-start} to see a sample path of an original untruncated system which starts with order $\bth{\sqrt{n}}$ queues of length 4. Note that the number of queues length 3 and length 4 decrease monotonically.

\begin{figure}[h]
\begin{subfigure}{0.5\textwidth}
\includegraphics[scale = 0.4]{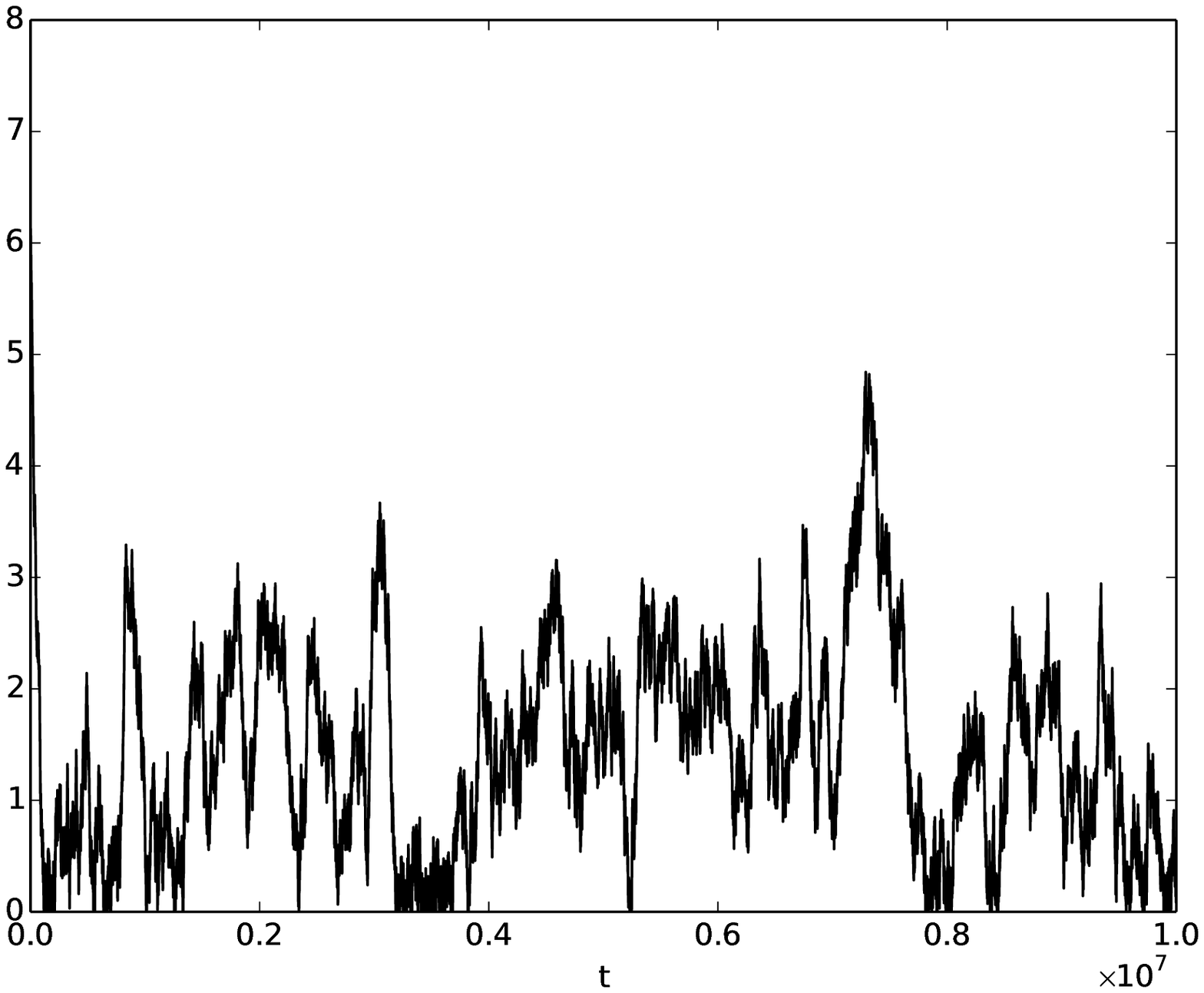}
\caption{Idle servers}
\end{subfigure}
\begin{subfigure}{0.5\textwidth}
\includegraphics[scale = 0.4]{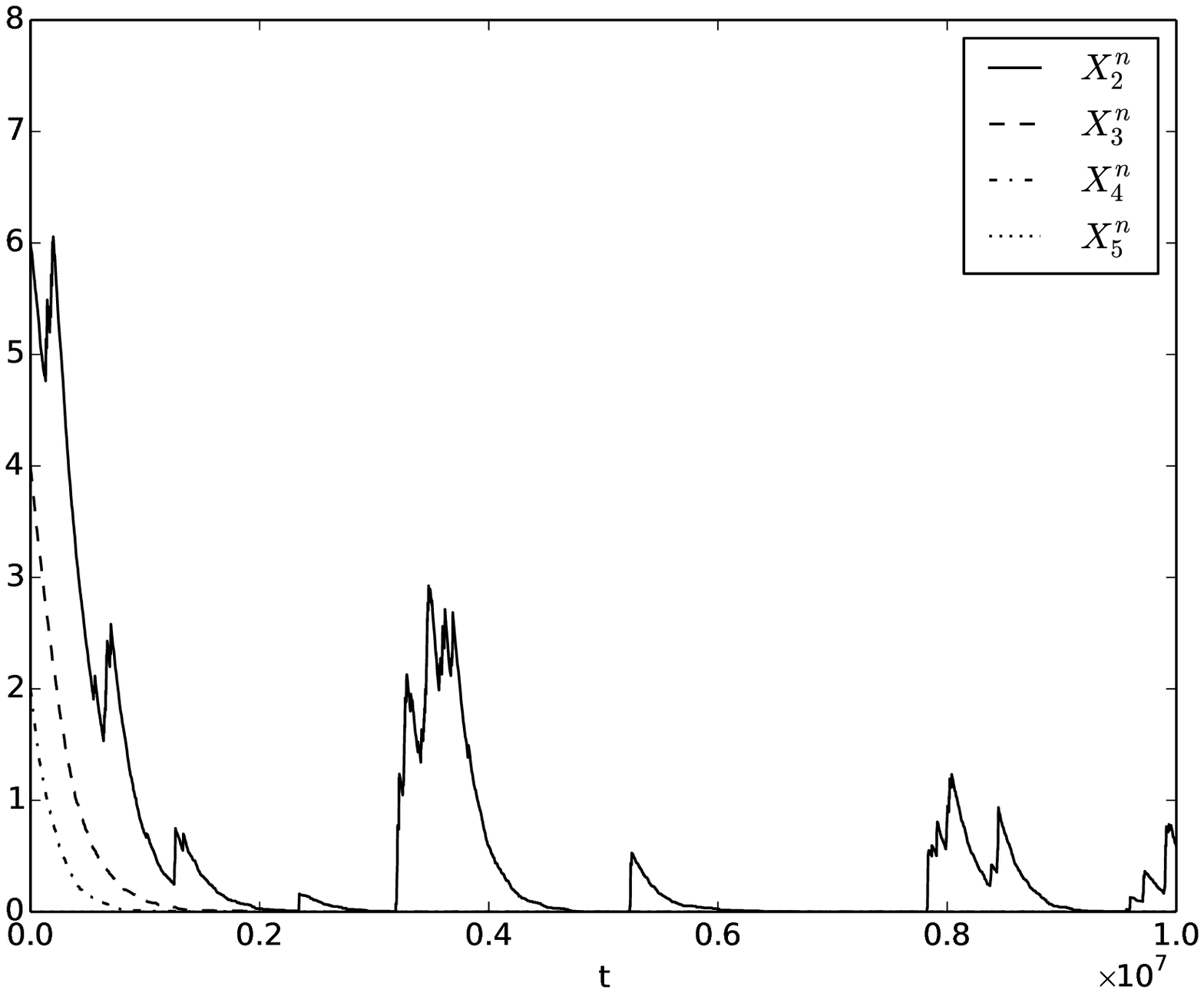}
\caption{Longer queues}
\end{subfigure}
\caption{A simulated sample path of an $M/M/n$-JSQ system, showing the scaled number of idle servers (a) and queues of length at least two ($X_2^n$), three ($X_3^n$), four ($X_4^n$), and five ($X_5^n$). Simulated with $n = 10^5$, $\beta = 2.0$.
}\label{fig:full-start}
\end{figure}

Now we make this more precise. Consider a system in which any arrival that would create a queue of length 3 or longer is rejected. That is, if an arrival occurs when all queues contain at least two customers, that arriving customer does not enter the system. We will denote this system $\hat{Q}^n = (\hat{Q}_1^n,\hat{Q}_2^n,\ldots)$. We also introduce scaled versions
\begin{equation}\label{eq:X-hat-def}
\hat{X}_1^n(t) = \frac{\hat{Q}_1^n(t) - n}{\sqrt{n}} \quad \text{ and } \quad \hat{X}_i^n(t) = \frac{\hat{Q}_i^n(t)}{\sqrt{n}}, \quad i \ge 2.
\end{equation}

It will also be convenient for us to adopt the condition from Theorem \ref{thm:trunc-result} that $\hat{Q}_{k+1}^n(0) = 0$ for some $k \ge 2$. Since queues of length 3 or longer are never created in this system, this implies $\hat{Q}_{i}^n(t) = 0$ for $i \ge k+1$, and thus analysis of $\hat{Q}^n(t)$ can be limited to the first $k$ dimensions. 

We will now construct the truncated process $\hat{X}^n(t)$ and show that it has the integral form in Theorem \ref{thm:app-int}. Our representation will be similar to the first martingale representation of \cite{martingale}; in particular it will rely upon random time changes of rate-1 Poisson processes.

\subsection{Random time change.}
We let $A, D_i$ for $1 \le i \le k$ be rate-1 Poisson processes and write
\begin{align}
\hat{Q}^n_1(t) &= Q^n_1(0) + A\left(\lambda_n n t\right) - D_1\left(\int_0^t\left(\hat{Q}^n_1(s) - \hat{Q}^n_2(s)\right)ds\right) - \hat{U}^n_1(t), \label{eq:app-pois-1}\\
\hat{Q}^n_2(t) &= Q^n_2(0) + \hat{U}^n_1(t) - D_2\left(\int_0^t\left(\hat{Q}^n_2(s) - \hat{Q}^n_3(s)\right) ds\right) - \hat{U}^n_2(t),\label{eq:app-pois-2} \\
\hat{Q}^n_i(t) &= Q^n_i(0) - D_i\left(\int_0^t\left(\hat{Q}^n_i(s) - \hat{Q}^n_{i+1}(s)\right) ds\right), \label{eq:app-pois-3} \\
\hat{Q}^n_k(t) &= Q^n_k(0) - D_k\left(\int_0^t\hat{Q}^n_k(s)ds\right), \label{eq:app-pois-4} 
\end{align}
where $\hat{U}_1^n(t)$ is the number of arrivals in $[0,t]$ when every server has at least one customer, and $\hat{U}_2^n(t)$ is the number of arrivals in $[0,t]$ when every server has at least one customer \emph{and} all $n$ servers have two customers. Formally, we define
\begin{align}
\hat{U}_1^n(t) &= \int_0^t\1\left\{\hat{Q}_1^n(s) = n\right\}dA(\lambda_nns), \label{eq:trunc-u1}\\
\hat{U}_2^n(t) &= \int_0^t\1\left\{\hat{Q}_1^n(s) = n, \hat{Q}_2^n(s) = n\right\}dA(\lambda_nns). \label{eq:trunc-u2}
\end{align}
We can understand \eqref{eq:app-pois-1} term-by-term: first we record the initial state of the system with $Q_1^n(0)$, then arrivals are counted at their full rate $\lambda_n n$. The $D_1$ term represents departures, which occur as a Poisson process with rate equal to the number of customers in service. Since $\hat{Q}_1$ includes queues of length 1 and length 2, however, $\hat{Q}_1$ will only decrease when a customer departs a queue and leaves the server empty. Therefore the instantaneous rate at time $s$ in the $D_1$ term is $\hat{Q}_1^n(s) - \hat{Q}_2^n(s)$, the number of queues of length exactly 1 at time $s$. Through the first three terms of \eqref{eq:app-pois-1} we have recorded what the value of $\hat{Q}_1$ would be if it were not constrained to be at most $n$, so the final term will represent this barrier. The process $\hat{U}_1^n$ records any arrival which would increase $\hat{Q}_1$ above $n$, balancing the overcounting we get from $A(\lambda_nnt)$.

We can understand \eqref{eq:app-pois-2} in much the same way, with the key difference being in the arrival process. Since arriving customers will always join the shortest available queue, the number of length 2 queues will increase only when all servers are busy. Such arrivals are exactly recorded by $\hat{U}_1^n$, so this will be the process we use to record potential increases to $\hat{Q}_2^n$. The process $\hat{U}_2^n$ provides the upper barrier $n$ on $\hat{Q}_2^n$.

The remaining equations \eqref{eq:app-pois-3}-\eqref{eq:app-pois-4} are the same except that we record no arrivals, as our truncated approximation does not create queues of length 3 or longer.

As in \cite[Lemma 2.1]{martingale}, we can verify that this construction is well defined and generates an element of $D^k$ by conditioning on the starting state $Q^n(0)$ and processes $A,D_i$ then constructing recursively.

\subsection{Martingales.}\label{sec:mart}
Because our approach to \eqref{eq:app-pois-1}-\eqref{eq:app-pois-4} will be to apply the functional central limit theorem (FCLT) for Poisson processes, we will now rewrite the time changes of Poisson processes as time changes of scaled Poisson processes. To that end, we define scaled martingales
\begin{align}
\hat{M}_0^{n}(t) &= \frac{1}{\sqrt{n}}A\left(\lambda_n n t\right) - \lambda_n \sqrt{n}t, \label{eq:mart-0}\\
\hat{M}_i^{n}(t) &= \frac{1}{\sqrt{n}}D_i\left(\int_0^t\left(\hat{Q}^n_{i}(s) - \hat{Q}^n_{i+1}(s)\right)ds\right) - \frac{1}{\sqrt{n}}\int_0^t\left(\hat{Q}^n_{i}(s) - \hat{Q}^n_{i+1}(s)\right)ds, \quad 1 \le i < k, \label{eq:mart-i}\\
\hat{M}_k^{n}(t) &= \frac{1}{\sqrt{n}}D_k\left(\int_0^t\hat{Q}^n_{k}(s)ds\right) - \frac{1}{\sqrt{n}}\int_0^t \hat{Q}^n_{k}(s) ds. \label{eq:mart-k}
\end{align}
Via an argument exactly analogous to that of in \S 7.1 of \cite{martingale} leading to Theorem 7.2 we obtain that $\hat{M}_i^{n}$ for $0 \le i \le k$ are square-integrable martingales with respect to an appropriate filtration. We note for later use that this argument also supplies the predictable quadratic variations
\begin{align}
\left\langle \hat{M}_0^{n}\right\rangle(t) &= \lambda_n t, \label{eq:trunc-qv-1}\\
\left\langle \hat{M}_i^{n}\right\rangle(t) &= \frac{1}{n}\int_0^t(\hat{Q}^n_i(s)-\hat{Q}^n_{i+1}(s))ds, \label{eq:trunc-qv-2}\\
\left\langle \hat{M}_k^{n}\right\rangle(t) &= \frac{1}{n}\int_0^t\hat{Q}^n_k(s)ds. \label{eq:trunc-qv-3}
\end{align}

We also define
\[
\hat{V}_1^n(t) = \frac{\hat{U}_1^n(t)}{\sqrt{n}} \quad\text{ and }\quad \hat{V}_2^n(t)  = \frac{\hat{U}_2^n(t)}{\sqrt{n}}.
\]
Then we have
\begin{align}
\hat{X}^n_1(t) &= \frac{\hat{Q}^n_1(t) - n}{\sqrt{n}}  \notag\\
&= \frac{Q^n_1(0) - n}{\sqrt{n}} + \frac{1}{\sqrt{n}}A\left(\lambda_n n t \right) \notag\\
&\quad\quad- \frac{1}{\sqrt{n}}D_1\left(\int_0^t(\hat{Q}^n_1(s)-\hat{Q}^n_2(s))ds\right) - \frac{\hat{U}_1^n(t)}{\sqrt{n}} \notag\\
&=X^n_1(0) + \hat{M}_0^{n}(t) + \lambda_n\sqrt{n}t - \hat{V}_1^n(t) \notag\\
&\quad\quad- \hat{M}_1^{n}(t)- \frac{1}{\sqrt{n}}\int_0^t\left(\hat{Q}^n_1(s) - \hat{Q}^n_2(s)\right)ds \notag\\
&= X^n_1(0) + \hat{M}_0^{n}(t) - \hat{M}_1^{n}(t) + \lambda_n\sqrt{n}t - \hat{V}_1^n(t) \notag\\
&\quad\quad -\sqrt{n}t- \int_0^t\left(\frac{\hat{Q}^n_1(s)-n}{\sqrt{n}}-\frac{\hat{Q}^n_2(s)}{\sqrt{n}}\right)ds \notag\\
&=X^n_1(0) + \hat{M}_0^{n}(t) - \hat{M}_1^{n}(t) - (1 - \lambda_n)\sqrt{n}t \label{eq:app-mart-1}\\
&\quad\quad- \int_0^t(\hat{X}^n_1(s)-\hat{X}^n_2(s))ds - \hat{V}_1^n(t), \notag
\end{align}
and
\begin{align}
\hat{X}^n_2(t) &= X^n_2(0) + \hat{V}_1^n(t) - \hat{M}_2^{n}(t) - \int_0^t\left(\hat{X}^n_2(s)-\hat{X}_3^n(s)\right)ds - \hat{V}_2^n(t), \label{eq:app-mart-2} \\
\hat{X}^n_i(t) &= X^n_i(0) - \hat{M}_i^{n}(t) - \int_0^t\left(\hat{X}^n_i(s)-\hat{X}_{i+1}^n(s)\right)ds, \quad 3 \le i \le k-1, \label{eq:app-mart-3}\\
\hat{X}^n_k(t) &= X^n_k(0) - \hat{M}_k^{n}(t) - \int_0^t\hat{X}^n_k(s)ds. \label{eq:app-mart-4}
\end{align}

At this point we can also note that \eqref{eq:app-mart-1}-\eqref{eq:app-mart-4} put $\hat{X}^n(t)$ in the integral form of Theorem \ref{thm:app-int}. The only difference is the processes $\hat{V}^n$, which are not described in exactly the same way. We see, however, that by \eqref{eq:trunc-u1} we have
\begin{align}
0 &= \int_0^\infty \1\{\hat{Q}_1^n(s) < n\}d\hat{U}_1^n(s) \notag\\
&= \int_0^\infty \1\{\hat{X}_1^n(s) < 0\}d\hat{U}_1^n(s) \notag\\
&= \int_0^\infty \1\{\hat{X}_1^n(s) < 0\}d\hat{V}_1^n(s).\label{eq:v1-valid}
\end{align}
Similarly by \eqref{eq:trunc-u2} we have
\begin{align}
0 &= \int_0^\infty \1\{\hat{Q}_1^n(s) < n \text{ or } \hat{Q}_2^n(s) < n\}d\hat{U}_2^n(t). \notag
\end{align}
which implies
\begin{align}
0 &= \int_0^\infty \1\{\hat{Q}_2^n(s) < n\}d\hat{U}_2^n(t) \notag\\
&= \int_0^\infty \1\{\hat{X}_2^n(s) < \sqrt{n}\}d\hat{V}_2^n(t). \label{eq:v2-valid}
\end{align}

Thus by \eqref{eq:app-mart-1}-\eqref{eq:app-mart-4} and \eqref{eq:v1-valid}-\eqref{eq:v2-valid} $\hat{X}^n$ is the unique solution of \eqref{eq:int-app-1}-\eqref{eq:int-app-3} for $b = X^n(0)$, $y_1 = \hat{M}_0^n(t)-\hat{M}_1^n(t)-(1-\lambda_n)\sqrt{n}t$, $y_i = -\hat{M}_i^n(t)$, $2 \le i \le k$, and $B = \sqrt{n}$. Thus to apply the CMT it remains to prove the convergence of the martingales \eqref{eq:mart-0}-\eqref{eq:mart-k}.

\section{Martingale convergence.}\label{sec:mart-conv}
We will now prove the convergence of $\hat{M}_i^{n}$ to Brownian motions.
\begin{lemma}\label{thm:mart-conv}
For the sequences of scaled martingales $\hat{M}_i^{n}$ defined in Section \ref{sec:mart} we have the convergence
\begin{equation}\label{eq:mart-limit}
\left(\hat{M}_0^{n},\hat{M}_1^{n},\hat{M}_2^{n},\ldots,\hat{M}_k^{n}\right) \Rightarrow (W_1,W_2,0,\ldots,0) \quad \text{ in } D\quad \text{ as } n \to \infty,
\end{equation}
where $W_1$ and $W_2$ are independent standard Brownian motions.
\end{lemma}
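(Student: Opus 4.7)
The plan is to apply a standard martingale FCLT (e.g., Whitt \S 8 or Ethier--Kurtz Theorem 7.1.4), which states that a sequence of square-integrable martingales with vanishing maximum jumps and predictable quadratic variations converging in probability to continuous deterministic limits $c_i(t)$ converges weakly to a continuous Gaussian martingale with those covariances. The target limit $(W_1,W_2,0,\ldots,0)$ corresponds to the covariance structure $c_0(t)=c_1(t)=t$, $c_i(t)=0$ for $i\ge 2$, together with pairwise zero cross-variations. The verification splits into three tasks: (i) check the maximum jump condition, (ii) identify the limiting predictable quadratic variations, (iii) argue that the cross-variations vanish, so the coordinates converge jointly to independent limits.

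Task (i) is essentially free: each $\hat M_i^n$ is driven by a unit-rate Poisson process whose increments are $\pm 1$, so the scaled jumps are bounded by $1/\sqrt{n}$, which tends to zero. For task (iii), the time-changed Poisson processes $A$ and $D_1,\dots,D_k$ are independent, so the martingales $\hat M_0^n,\hat M_1^n,\dots,\hat M_k^n$ are mutually orthogonal (their predictable cross-variations are identically zero). This both gives joint convergence and forces the limiting Brownian motions to be independent.

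For task (ii), I would use the explicit formulas \eqref{eq:trunc-qv-1}--\eqref{eq:trunc-qv-3}. Immediately $\langle\hat M_0^n\rangle(t)=\lambda_n t\to t$ by \eqref{eq:lambda-scale}. For $i=1$, write
\begin{align*}
\langle\hat M_1^n\rangle(t)
=\int_0^t\!\left(\frac{\hat Q_1^n(s)}{n}-\frac{\hat Q_2^n(s)}{n}\right)ds
=\int_0^t\!\left(1+\frac{\hat X_1^n(s)}{\sqrt n}-\frac{\hat X_2^n(s)}{\sqrt n}\right)ds,
\end{align*}
so if $\hat X_1^n$ and $\hat X_2^n$ are stochastically bounded on compact intervals then $\langle\hat M_1^n\rangle(t)\to t$ in probability, uniformly on compacts. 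For $2\le i\le k-1$,
\begin{align*}
\langle\hat M_i^n\rangle(t)=\frac{1}{\sqrt n}\int_0^t\!\left(\hat X_i^n(s)-\hat X_{i+1}^n(s)\right)ds,
\end{align*}
and similarly $\langle\hat M_k^n\rangle(t)=\frac{1}{\sqrt n}\int_0^t \hat X_k^n(s)\,ds$; both go to zero once the $\hat X_i^n$'s are $O_P(1)$ on compacts.

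The main obstacle is the a priori stochastic boundedness $\|\hat X_i^n\|_t=O_P(1)$ that underlies these QV limits: we cannot simply import the full convergence of $\hat X^n$, since that is the eventual conclusion of the CMT argument that consumes this very lemma. I would therefore invoke the bounding lemma (Lemma \ref{thm:stoch-bound}) referenced earlier, or prove a stand-alone stochastic-boundedness estimate by a bootstrap on $i$: bound $\hat X_1^n$ using $\hat Q_1^n\le n$ together with Doob's inequality on $\hat M_0^n,\hat M_1^n$ (whose QVs are already bounded by $t$ deterministically since $\hat Q_1^n-\hat Q_2^n\le n$), then propagate up the chain via \eqref{eq:app-mart-2}--\eqref{eq:app-mart-4} using Gronwall, since each $\hat X_i^n$ satisfies a linear integral relation driven by $\hat X_{i+1}^n$, a martingale with QV controlled by the crude bound $\hat Q_i^n\le n$, and (for $i=2$) the regulator $\hat V_1^n$, which itself can be extracted from \eqref{eq:app-mart-1} via the one-sided reflection map of \S\ref{sec:ref-map}. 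Once these bounds are in hand, the QV convergences are immediate and the martingale FCLT concludes the proof.
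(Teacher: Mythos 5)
Your approach is correct but genuinely different from the paper's. The paper uses the \emph{first} of the two standard martingale-method routes in Pang--Talreja--Whitt: it writes each $\hat M_i^n$ as a composition $M_{C,n}\circ\Phi_{C,n}$ of a scaled Poisson process with a (random or deterministic) time change, proves a fluid limit $\Phi_{D_1,n}\Rightarrow e$, $\Phi_{D_i,n}\Rightarrow 0$ for $i\ge 2$ (Lemma~\ref{thm:fluid}), and then invokes the FCLT for independent Poisson processes (Lemma~\ref{thm:fclt}) together with the continuity of the composition map $\circ$. You instead invoke the martingale FCLT directly, verifying the three standard hypotheses: jump size $\le 1/\sqrt n$, predictable quadratic variations \eqref{eq:trunc-qv-1}--\eqref{eq:trunc-qv-3} converging to the deterministic limits $t,t,0,\dots,0$, and vanishing predictable cross-variations because $A,D_1,\dots,D_k$ are independent simple point processes. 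Both routes bottom out on exactly the same technical ingredient---the stochastic boundedness in $D$ of $\hat X_i^n$ on compacts---which you correctly isolate and propose to obtain from Lemma~\ref{thm:stoch-bound}, precisely as the paper does. Your route is a bit more self-contained in that it avoids the composition-continuity machinery and makes the role of quadratic variation explicit; the paper's route spares you from quoting a general-purpose martingale FCLT and from checking the jump condition.

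One caveat on your ``stand-alone'' fallback: the bootstrap direction you sketch (bound $\hat X_1^n$ first, then ``propagate up the chain'') is not quite how the coupling is arranged. The equation for $\hat X_1^n$ feeds on $\hat X_2^n$ through the drift, and $\hat X_1^n$ and $\hat X_2^n$ are further coupled through the shared regulator $\hat V_1^n$, so they must be bounded \emph{jointly} by a two-dimensional Gronwall argument, using $\hat X_3^n$ as an exogenous input; and $\hat X_3^n,\dots,\hat X_k^n$ are bounded trivially (since the truncated system never creates queues of length $\ge 3$, $\hat Q_i^n(t)\le Q_i^n(0)$ for $i\ge 3$) with no martingale or Gronwall machinery needed at all. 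This is exactly what Lemma~\ref{thm:stoch-bound} and the ensuing paragraph in the paper accomplish, so your primary reliance on that lemma is what makes the proposal sound; the alternative one-directional bootstrap as written would not close.
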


To prove this lemma we will rely upon the CMT and the FCLT for Poisson processes (\cite[Theorem 4.2]{martingale}), which we restate here convenience.
\begin{lemma}{(FCLT for independent Poisson processes)}\label{thm:fclt}
If $A$ and $D_i$ are independent rate-1 Poisson processes and
\[M_{C,n}(t) = \frac{C(nt) - nt}{\sqrt{n}}\]
for $C = A, D_i$, then
\[(M_{A,n},M_{D_1,n},\ldots,M_{D_k,n}) \Rightarrow (W_1,W_2,\ldots,W_{k+1}) \quad \text{ in } D^{k+1} \quad \text{ as } n \to \infty\]
where $W_i$ are independent standard Brownian motions.
\end{lemma}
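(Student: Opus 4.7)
The plan is to first establish the one-dimensional version $M_{C,n} \Rightarrow W$ for a single rate-1 Poisson process $C$, and then upgrade to joint convergence in $D^{k+1}$ by exploiting the mutual independence of the processes $A, D_1, \ldots, D_k$.

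For the one-dimensional piece, I would invoke the martingale functional CLT. Observe that $M_{C,n}$ is a cadlag square-integrable martingale with respect to the natural filtration $\mathcal{F}_t^n = \sigma(C(ns) : s \le t)$, with predictable quadratic variation
\[
\langle M_{C,n} \rangle(t) = \frac{1}{n}\langle C(n\cdot) - n\cdot \rangle(t) = \frac{nt}{n} = t,
\]
and jumps of constant magnitude $1/\sqrt{n} \to 0$. These two properties (deterministic quadratic variation equal to the candidate limit's variance, plus asymptotically negligible jumps) are exactly the hypotheses of the martingale FCLT (see e.g.\ Ethier-Kurtz Thm.\ 7.1.4 or Jacod-Shiryaev VIII.3.11), which yields $M_{C,n} \Rightarrow W$ in $D$. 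A more elementary route is also available: finite-dimensional convergence follows from the classical Lindeberg CLT applied to the independent Poisson increments $C(nt_j) - C(nt_{j-1})$ over a partition $0 = t_0 < \cdots < t_m$, while tightness follows from Billingsley's moment criterion via
\[
E[(M_{C,n}(t) - M_{C,n}(s))^4] = 3(t-s)^2 + (t-s)/n,
\]
which is bounded by $C(t-s)^2$ on any bounded time interval once $n$ is large enough (with a separate trivial treatment for increments smaller than $1/n$, since the underlying process is piecewise constant).

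Given the one-dimensional result, joint convergence follows with little extra effort. Since $A, D_1, \ldots, D_k$ are independent by assumption, the scaled processes $M_{A,n}, M_{D_1,n}, \ldots, M_{D_k,n}$ are independent for every $n$. The joint finite-dimensional distributions therefore factor, and by the one-dimensional result each factor converges weakly to a centered Gaussian with covariance $\min(s,t)$; by Slutsky-type reasoning for product measures the joint finite-dimensional distributions converge to the corresponding product Gaussians, which are precisely the finite-dimensional distributions of $(W_1, \ldots, W_{k+1})$ with independent standard Brownian components. Tightness in $D^{k+1}$ under the product topology is equivalent to tightness of each marginal in $D$ (see, e.g., \cite{whitt-stoch-proc} \S 3.3), which the one-dimensional step already supplies. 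Combining finite-dimensional convergence with tightness via Prokhorov's theorem yields the claimed joint weak convergence.

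The only substantive technical step is the one-dimensional Poisson FCLT, and even that is standard: the exactness of the compensator and the $1/\sqrt{n}$ jump bound make the martingale FCLT's hypotheses essentially immediate. The multidimensional extension is then a formality thanks to independence, so I do not anticipate any genuine obstacle — this lemma is included for convenient reference rather than because of proof difficulty.
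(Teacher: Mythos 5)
Your argument is correct, but it is worth knowing that the paper does not prove this lemma at all: it is quoted verbatim from Theorem 4.2 of Pang, Talreja, and Whitt \cite{martingale}, and is stated ``for convenience'' as an imported black box. So there is no proof in the paper to compare against; what you have done is supply a self-contained justification of a standard result that the authors simply cite. Your route is the standard one and both of your suggested one-dimensional arguments work. The martingale-FCLT version is the cleanest: the compensator computation $\langle M_{C,n}\rangle(t) = nt/n = t$ is exact and the jumps are deterministically of size $1/\sqrt{n}$, so the hypotheses of the martingale CLT are met with no asymptotic estimates needed. Your fourth-moment computation in the elementary route is also right (the fourth central moment of a Poisson variable with mean $\mu$ is $\mu + 3\mu^2$, giving $3(t-s)^2 + (t-s)/n$ after scaling), and you correctly flag the one genuine wrinkle there, namely that the bound is only of the form $C(t-s)^2$ for $t - s \ge 1/n$, so Billingsley's moment criterion needs the usual supplementary treatment of small increments. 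The upgrade to joint convergence via independence is a formality exactly as you say: marginal tightness gives joint tightness in the product topology on $D^{k+1}$, and the finite-dimensional distributions factor. In short, your proof is sound and fills in a step the paper outsources; if you were writing this into the paper you would more likely follow the authors and cite \cite{martingale} rather than reprove it.
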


To apply Lemma \ref{thm:fclt} we will define random and deterministic time changes such that the martingales $\hat{M}_i^{n}$ can be written as a composition of a time change and the scaled Poisson processes $M_{C,n}$. Specifically, let
\begin{align}
\Phi_{A,n}(t) &= \lambda_n t, \label{eq:trunc-change-1}\\
\Phi_{D_i,n}(t) &= \frac{1}{n}\int_0^t\hat{Q}^n_i(s)ds-\frac{1}{n}\int_0^t\hat{Q}^n_{i+1}(s)ds, \label{eq:trunc-change-2}\\
\Phi_{D_k,n}(t) &= \frac{1}{n}\int_0^t\hat{Q}^n_{k}(s)ds, \label{eq:trunc-change-3}
\end{align}
so that we have
\[\hat{M}_0^{n} = \hat{M}_{A,n} \circ \Phi_{A,n}, \quad \hat{M}_i^{n} = \hat{M}_{D_i,n} \circ \Phi_{D_i,n}, \quad 1 \le i \le k.\]
To apply the CMT with the composition map $\circ$, we need to determine the limits of the time changes \eqref{eq:trunc-change-1}-\eqref{eq:trunc-change-3}.

First we note that \eqref{eq:lambda-scale} implies $\lambda_n \to 1$, which in turn implies
\begin{equation}\label{eq:trunc-change-limit-1}
\Phi_{A,n} \Rightarrow e \quad \text{ as } n \to \infty,
\end{equation}
where $e$ is the identity function in $D$.

Next we note that the terms of \eqref{eq:trunc-change-2} interleave over $i$, so for $1 \le i \le k-1$ we have
\[\Phi_{D_i,n} \Rightarrow f_i - f_{i+1} \text{ as } n \to \infty,\]
where $f_i$ is the limit of $\tilde{\Phi}_{D_i,n}$ with
\[\tilde{\Phi}_{D_i,n}(t) = \frac{1}{n}\int_0^t\hat{Q}^n_i(s)ds.\]
To find $f_i$ we will first show fluid limits for $\hat{Q}_i^n$.
\begin{lemma}\label{thm:fluid}
Let $\Psi_i^n$ for $1 \le i \le k$ be defined by
\[\Psi^n_i(t) = \frac{\hat{Q}_i^n(t)}{n}, \quad t \ge 0.\]
Then for $2 \le i \le k$,
\begin{equation}\label{eq:trunc-psi}
\Psi^n_1 \Rightarrow \omega \quad \text{ and } \quad \Psi_i^n \Rightarrow 0 \quad \text{ as } n \to \infty
\end{equation}
where $\omega(t) = 1$ for $t \ge 0$.
\end{lemma}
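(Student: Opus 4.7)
My plan is to handle the three regimes $i \ge 3$, $i = 2$, and $i = 1$ separately, using monotonicity for the deep levels and a joint Gronwall argument driven by the Skorokhod identity for the two shallow ones. For $i \ge 3$, the Poisson representation \eqref{eq:app-pois-3}--\eqref{eq:app-pois-4} contains no arrival term, so $\hat{Q}_i^n$ is pathwise non-increasing. Hence $\hat{Q}_i^n(t) \le \hat{Q}_i^n(0) = \sqrt{n}X_i^n(0)$ for all $t \ge 0$, and the initial-condition hypothesis $X_i^n(0) \Rightarrow X_i(0) \in \R$ forces this to be of order $\bo{\sqrt{n}}$ in probability, uniformly in $t$. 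Dividing by $n$ yields $\Psi_i^n \Rightarrow 0$ in $D$.

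For $i \in \{1,2\}$ I would introduce $Y^n(t) = n - \hat{Q}_1^n(t) \ge 0$, the number of idle servers. Multiplying \eqref{eq:app-mart-1} by $-\sqrt{n}$ and recalling $\hat{U}_1^n = \sqrt{n}\hat{V}_1^n$ gives
\begin{equation*}
Y^n(t) = Y^n(0) - \sqrt{n}\bigl(\hat{M}_0^n - \hat{M}_1^n\bigr)(t) + (1-\lambda_n)nt - \int_0^t\bigl(Y^n + \hat{Q}_2^n\bigr)(s)\,ds + \hat{U}_1^n(t),
\end{equation*}
in which $\hat{U}_1^n$ is the Skorokhod regulator enforcing $Y^n \ge 0$. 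Similarly, multiplying \eqref{eq:app-mart-2} by $\sqrt{n}$, dropping the nonpositive contributions $-\hat{U}_2^n$ and $-\int_0^t\hat{Q}_2^n$, and using $\hat{Q}_3^n(s) \le \hat{Q}_3^n(0)$ from the first paragraph yields
\begin{equation*}
\hat{Q}_2^n(t) \le Q_2^n(0) + t\hat{Q}_3^n(0) + \sqrt{n}\vn{\hat{M}_2^n}_t + \hat{U}_1^n(t).
\end{equation*}
The decisive estimate is on $\hat{U}_1^n$ itself. Writing $R^n = Y^n - \hat{U}_1^n$ for the free driving process, the Skorokhod identity $\hat{U}_1^n(t) = \sup_{s \le t}(-R^n(s))^+$, combined with the observation that the nonnegative constants $Y^n(0)$ and $(1-\lambda_n)nt$ only reduce $-R^n$, produces
\begin{equation*}
\hat{U}_1^n(t) \le \sqrt{n}\vn{\hat{M}_0^n - \hat{M}_1^n}_t + \int_0^t\bigl(Y^n + \hat{Q}_2^n\bigr)(s)\,ds.
\end{equation*}

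Feeding this bound back into the two displays produces a coupled pair of integral inequalities for $Y^n$ and $\hat{Q}_2^n$ in precisely the form required by Lemma \ref{thm:gronwall}, with additive constants comprising the initial data $Y^n(0), Q_2^n(0), t\hat{Q}_3^n(0)$ (each of order $\bo{\sqrt{n}}$ in probability by the initial-condition hypothesis), the drift $(1-\lambda_n)nt = \beta_n\sqrt{n}t$ (also $\bo{\sqrt{n}}$ by \eqref{eq:lambda-scale}), and the martingale suprema $\sqrt{n}\vn{\hat{M}_i^n}_t$, which are $\bo{\sqrt{n}}$ in probability by Doob's maximal inequality combined with the quadratic-variation bounds \eqref{eq:trunc-qv-1}--\eqref{eq:trunc-qv-3} (each $\langle\hat{M}_i^n\rangle(t) \le t$). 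Gronwall's inequality then yields $Y^n(t), \hat{Q}_2^n(t) = \bo{\sqrt{n}}$ in probability uniformly on compact intervals, so that $\Psi_1^n = 1 - Y^n/n \Rightarrow 1$ and $\Psi_2^n = \hat{Q}_2^n/n \Rightarrow 0$, completing the proof.

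The main obstacle is the implicit coupling between the first two coordinates through $\hat{U}_1^n$: neither $Y^n$ nor $\hat{Q}_2^n$ can be bounded in isolation, because arrivals blocked by the barrier $\hat{Q}_1^n \le n$ are funneled directly into $\hat{Q}_2^n$. The Skorokhod representation of $\hat{U}_1^n$ in terms of the same pair $(Y^n, \hat{Q}_2^n)$ is what allows the joint Gronwall system to close rather than produce a circular estimate.
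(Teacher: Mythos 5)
Your proof is correct and follows essentially the same Gronwall-based route the paper uses. For $i\ge 3$ you invoke the same pathwise monotonicity argument the paper does (no arrivals in \eqref{eq:app-pois-3}--\eqref{eq:app-pois-4}, so $\hat{Q}_i^n$ is non-increasing). For $i\in\{1,2\}$, the paper proves the abstract Lemma~\ref{thm:stoch-bound} by passing to the unreflected process $W^n$ of Lemma~\ref{thm:app-int-unrefl} and appealing to the Lipschitz continuity of $\phi_0,\phi_{B_n}$; your regulator bound $\hat{U}_1^n(t) \le \sqrt{n}\,\vn{\hat{M}_0^n-\hat{M}_1^n}_t + \int_0^t\bigl(Y^n+\hat{Q}_2^n\bigr)(s)\,ds$ is the same estimate obtained directly from the explicit formula \eqref{eq:regulator} that sits inside $\psi_0$, and your coupled system for $(Y^n,\hat{Q}_2^n)$ plays the role of the system for $(u_1,u_2)$ in the paper's proof of Lemma~\ref{thm:stoch-bound}. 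A genuine small simplification is your treatment of the martingale term: observing $\langle\hat{M}_i^n\rangle(t)\le t$ (from the crude bounds $\hat{Q}_1^n\le n$, $\hat{Q}_2^n\le n$) and closing with Doob's $L^2$ maximal inequality avoids the paper's bound via $\tfrac{t}{n}\bigl(Q_i^n(0)+A(\lambda_n n t)\bigr)$, the Poisson SLLN, and \cite[Lemma 5.8]{martingale}. One technicality your write-up shares with the paper: Lemma~\ref{thm:gronwall} is stated for continuous $f,g$, while both arguments apply it to running suprema of jump processes, which need not be continuous; this is harmless under the standard extension of Gronwall to locally bounded measurable functions, but it is worth a remark.
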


The proof of this lemma is found in Section \ref{sec:fluid}. To use Lemma \ref{thm:fluid} we define a continuous function $h:D \to D$ by
\[h(x)(t) = \int_0^tx(s)ds\]
for $t \ge 0$. The $\tilde{\Phi}_{D_1,n} = h \circ \Psi_n$ so by the CMT and Lemma \ref{thm:fluid} we know $f_1 = h \circ \omega$. Namely,
\[f_1(t) = \int_0^t 1 ds = t\]
so $f_1 = e$ is the identity function in $D$. Therefore we have
\[
\tilde{\Phi}_{D_1,n} \Rightarrow e \quad \text{ as } n \to \infty.
\]
For $2 \le i \le k$ we have $f_i(t) = \int_0^t0ds = 0$ so $f_i = 0$ on $D$.
We conclude that
\begin{equation}\label{eq:trunc-change-limit-2}
\Phi_{D_1,n} \Rightarrow e \quad \text{ as } n \to \infty,
\end{equation}
and for $2 \le i \le k$
\begin{equation}\label{eq:trunc-change-limit-3}
\Phi_{D_i,n} \Rightarrow 0 \quad \text{ as } n \to \infty.
\end{equation}

Therefore once we establish Lemma \ref{thm:fluid} we can prove Lemma \ref{thm:mart-conv}:
\begin{proof}[Proof of Lemma \ref{thm:mart-conv}.]
We apply the CMT with Lemma \ref{thm:fclt} and the limits \eqref{eq:trunc-change-limit-1}, \eqref{eq:trunc-change-limit-2}, and \eqref{eq:trunc-change-limit-3} to obtain
\begin{align}
\left(\hat{M}_0^{n},\hat{M}_1^{n},\hat{M}_2^{n}, \ldots, \hat{M}_k^{n}\right) &= \left(M_{A,n} \circ \Phi_{A,n},M_{D_1,n} \circ \Phi_{D_1,n}, M_{D_2,n} \circ \Phi_{D_2,n}, \ldots, M_{D_k,n} \circ \Phi_{D_k,n}\right) \notag\\
&\Rightarrow \left(W_1 \circ e, W_2 \circ e, W_3 \circ 0, \ldots, W_{k+1} \circ 0\right) \notag\\
&= (W_1,W_2,0, \ldots, 0). \notag
\end{align}
\end{proof}

\subsection{Fluid limit.}\label{sec:fluid}
We will prove Lemma \ref{thm:fluid} by showing that $\hat{X}^n$ is stochastically bounded in $D$. Namely, we will prove that the sequence of real-valued random variables $\vn{\hat{X}_i^n}_t$ is tight for every $t > 0$ and $i \ge 1$. For a more complete discussion of stochastic boundedness as we use it here see \S 5 of \cite{martingale}.

The stochastic boundedness of $\hat{X}_1^n$ and $\hat{X}_2^n$ will follow from the stochastic boundedness of $\hat{M}_0^{n}$, $\hat{M}_1^{n}$, $\hat{M}_2^{n}$, and $\hat{X}_3^n$. To see this, we prove
\begin{lemma}\label{thm:stoch-bound}
Given $(B_n,X_i^n(0),Y_i^{n})$ a random element of $\Rbar_+ \times \R \times D$ for each $n \ge 1$ and $i = 1,2$, recall that Lemma \ref{thm:trunc-int-low} implies that the system
\begin{align*}
\hat{X}_1^n(t) &= X_1^n(0) + Y^{n}_1(t) + \int_0^t(-\hat{X}_1^n(s)+\hat{X}_2^n(s))ds - \hat{V}_1^n(t), \\
\hat{X}_2^n(t) &= X_2^n(0) + Y^{n}_2(t) + \int_0^t(-\hat{X}_2^n(s))ds +\hat{V}_1^n(t)- \hat{V}_2^n(t) \ge 0, \\
0 &= \int_0^\infty \1\{\hat{X}_1^n(t) < 0\}d\hat{V}_1^n(t), \\
0 &= \int_0^\infty \1\{\hat{X}_2^n(t) < B_n\}d\hat{V}_2^n(t),
\end{align*}
has a unique solution $(\hat{X}^n,\hat{V}^n)$. If the sequences $(X^n(0) , n \ge 1)$ and $(Y^{n}_i , n \ge 1)$ are stochastically bounded for $i = 1,2$, then the sequence $(\hat{X}^n , n \ge 1)$ is stochastically bounded in $D$.
\end{lemma}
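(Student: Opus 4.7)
The plan is to use the representation from Lemma \ref{thm:app-int-unrefl} to decouple the reflection from the dynamics, and then run a coupled Gronwall estimate modeled on the uniqueness argument already given in that lemma's proof. By the construction used in Lemma \ref{thm:trunc-int-low}, we may write $\hat{X}_1^n = \phi_0(w_1^n)$, $\hat{V}_1^n = \psi_0(w_1^n)$, $\hat{X}_2^n = \phi_{B_n}(w_2^n)$, and $\hat{V}_2^n = \psi_{B_n}(w_2^n)$, where $w^n = \xi(B_n, X^n(0), Y^n)$ is the unique solution of \eqref{eq:w-1}-\eqref{eq:w-2}. Working with $w^n$ avoids any direct analysis of the regulators, and, crucially, leads to bounds that are uniform in $B_n \in \Rbar_+$.

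The key input is the pair of uniform estimates $\vn{\phi_\kappa(x)}_t \le 2\vn{x}_t$ and $\vn{\psi_\kappa(x)}_t \le \vn{x}_t$, valid for every $\kappa \in \Rbar_+$ (including $\kappa = \infty$, by the convention $(\phi_\infty, \psi_\infty) = (e,0)$). These follow from \eqref{eq:lips-reg}-\eqref{eq:lips-ref} taken with $\kappa = \kappa'$ and $x' \equiv 0$, together with $\phi_\kappa(0) = \psi_\kappa(0) = 0$. Substituting these into \eqref{eq:w-1}-\eqref{eq:w-2} and writing $K_i^n := |X_i^n(0)| + \vn{Y_i^n}_t$ yields the coupled integral inequalities
\[\vn{w_1^n}_t \le K_1^n + 2\int_0^t \bigl(\vn{w_1^n}_s + \vn{w_2^n}_s\bigr)\,ds, \qquad \vn{w_2^n}_t - \vn{w_1^n}_t \le K_2^n + 2\int_0^t \vn{w_2^n}_s\,ds.\]
Following the device used in the uniqueness portion of Lemma \ref{thm:app-int-unrefl}, I would set $u_1(t) := \vn{w_1^n}_t$ and $u_2(t) := \bigl(\vn{w_2^n}_t - \vn{w_1^n}_t\bigr)^+$, so that $\vn{w_2^n}_s \le u_1(s) + u_2(s)$; Lemma \ref{thm:gronwall} applied to the resulting coupled system then gives $u_1(t),\, u_2(t) \le C(t)(K_1^n + K_2^n)$ for a deterministic $C(t)$ growing like $e^{6t}$.

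Translating back through $\vn{\hat{X}_i^n}_t \le 2\vn{w_i^n}_t$ bounds each $\vn{\hat{X}_i^n}_t$ by a deterministic multiple of $K_1^n + K_2^n$, which is a stochastically bounded sequence of real-valued random variables by hypothesis; this is exactly the stochastic boundedness of $\hat{X}^n$ in $D$. I expect the only real obstacle to be ensuring all constants remain uniform in $B_n$ across $\Rbar_+$, especially at $B_n = \infty$; comparing $\phi_\kappa$ and $\psi_\kappa$ to their vanishing values at $0$ rather than to any $B_n$-dependent baseline handles this cleanly, and the remainder is a direct coupled Gronwall argument in the same spirit as those already carried out for Lemma \ref{thm:app-int-unrefl}.
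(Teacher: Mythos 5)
Your proposal is correct and follows essentially the same route as the paper's proof: decompose via Lemma \ref{thm:app-int-unrefl} into the unreflected pair $w^n = \xi(B_n, X^n(0), Y^n)$, bound $\vn{w_1^n}_t$ and $\vn{w_2^n}_t$ through the same auxiliary functions $u_1(t) = \vn{w_1^n}_t$ and $u_2(t) = (\vn{w_2^n}_t - \vn{w_1^n}_t)^+$ fed into Lemma \ref{thm:gronwall}, and then translate back via the Lipschitz property of $\phi_0$ and $\phi_{B_n}$ at the origin (which is indeed uniform over $\kappa \in \Rbar_+$). The only differences are inessential constants in the Gronwall step.
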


Note that we do not require boundedness for $B_n$.

\begin{proof}[Proof.] 
We fix $t > 0$. We will establish the bound
\begin{equation}\label{eq:stoch-bound}
\vn{\hat{X}^n}_t \le 8e^{6t}\left(|X^n(0)| + \vn{Y^{n}}_t\right),
\end{equation}
from which the result follows.

To show \eqref{eq:stoch-bound}, we will prove a similar bound for the unreflected process $W^n$ defined by Lemma \ref{thm:app-int-unrefl}. Then \eqref{eq:stoch-bound} will follow from the Lipschitz continuity of the reflection maps $\phi_0$ and $\phi_{B_n}$.

Just as in Lemma \ref{thm:trunc-int-low} and Lemma \ref{thm:app-int-unrefl} we write $\hat{X}_1^n(t) = \phi_0(W_1^n(t))$ and $\hat{X}_2^n(t) = \phi_{B^n}(W_2^n(t))$ where $W_1^n(t)$ and $W_2^n(t)$ satisfy
\begin{align}
W_1^n(t) &=   X_1^n(0) + Y^{n}_1(t)+ \int_0^t(-\phi_0(W_1^n(s))+\phi_{B^n}(W_2^n(s)))ds, \label{eq:stoch-w-1} \\
W_2^n(t) &= X_2^n(0) + Y^{n}_2(t) + \int_0^t(-\phi_{B^n}(W_2^n(s)))ds +\psi_0\left(W_1^n(t)\right) \label{eq:stoch-w-2}.
\end{align}

We now use Gronwall's inequality as stated in Lemma \ref{thm:gronwall}. Using the Lipschitz property for $\phi_0$,$\phi_{B_n}$, and $\psi_0$ we have for $t \ge 0$
\begin{align*}
\vn{W_1^n}_t &\le |X_1^n(0)| + \vn{Y_1^{n}}_t + 2\int_0^t\left(\vn{W^n_2}_s + \vn{W^n_1}_s\right)ds, \\
\vn{W_2^n}_t &\le |X_2^n(0)| + \vn{Y_2^{n}}_t + \vn{\psi_0(W_1^n)}_t + \int_0^t\vn{W_2^n}_sds.
\end{align*}
Now we note that we have
\[\vn{\psi_0(W_1^n)}_t \le \vn{W_1^n}_t.\]
We define
\begin{align*}
u_1(t) &= \vn{W_1^n}_t \quad\text{ and }\quad u_2(t) = \left(\vn{W_2^n}_t - \vn{W_1^n}_t\right)^+.
\end{align*}
Finally we note
\begin{equation}\label{eq:stoch-u2-bound}
\vn{W^n_2}_t \le u_2(t) + u_1(t),
\end{equation}
so we can write the inequalities
\begin{align*}
u_1(t) &\le |X_1^n(0)| + \vn{Y_1^{n}}_t + 4\int_0^tu_1(s)ds + 2\int_0^tu_2(s)ds,\\
u_2(t) &\le |X_2^n(0)| + \vn{Y_2^{n}}_t + \int_0^tu_1(s)ds + \int_0^tu_2(s)ds.
\end{align*}
Let $|X^n(0)| + \vn{Y^{n}}_t = K$. Then Lemma \ref{thm:gronwall} implies
\[u_1(t) \le 2Ke^{6t} \quad \text{ and }\quad u_2(t) \le 2Ke^{6t}.\]
From \eqref{eq:stoch-u2-bound} and the definitions of $u_1$ and $u_2$ we obtain
\[\vn{W_1^n}_t \le 2Ke^{6t} \quad \text{ and } \quad \vn{W_2^n}_t \le 4Ke^{6t}.\]
Since $\phi_0$ and $\phi_{B^n}$ are Lipschitz continuous with constant $2$ this implies
\[\vn{\hat{X}_1^n}_t \le 4Ke^{6t} \quad \text{ and } \quad \vn{\hat{X}_2^n}_t \le 8Ke^{6t},\]
which proves \eqref{eq:stoch-bound}.
\end{proof}
Note that this proof also provides the boundedness of $w$ that we use in the proof of continuity of $w$ in Lemma \ref{thm:app-int-unrefl}, and that it does not use any of the continuity properties proved using that boundedness.

In our application of Lemma \ref{thm:stoch-bound} we will have $Y_1^{n} = \hat{M}_1^{n,1}-\hat{M}_1^{n,2} - (1-\lambda_n)\sqrt{n}t$ and $Y_2^n(t) = \hat{M}_2^{n,2}(t) + \int_0^t\hat{X}_3^n(s)ds$, so it remains to prove that $ \int_0^t\hat{X}_3^n(s)ds$ and each martingale $\hat{M}_k^{n,i}$ is stochastically bounded. 

To show that $\int_0^t\hat{X}_3^n(s)ds$ is stochastically bounded we need only show that $\hat{X}_3^n$ is stochastically bounded. This follows from the fact that $Q_3^n(0)$ is stochastically bounded by assumption \eqref{eq:trunc-initial} and 
\[\hat{X}^n_3(t) = \frac{\hat{Q}_3^n(t)}{\sqrt{n}} \le \frac{Q_3^n(0)}{\sqrt{n}}\]
because no queues of length 3 or longer are ever created. An identical argument proves stochastic boundedness of $\hat{X}^n_i$ for $4 \le i \le k$.

To prove the stochastic boundedness of these martingales we will use the following lemma from \cite{martingale}:
\begin{lemma}[\cite{martingale} Lemma 5.8]
Suppose that, for each $n \ge 1$, $M_n$ is a square integrable martingale with predictable quadratic variation $\langle M_n \rangle$. If the sequence of random variables $\langle M_n\rangle(T)$ is stochastically bounded in $\R$ for each $T > 0$, then the sequence of stochastic processes $M_n$ is stochastically bounded in $D$.
\end{lemma}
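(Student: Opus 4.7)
The plan is to apply Lenglart's domination inequality, which upgrades control of the predictable compensator to control of the running supremum in a purely distributional sense, without requiring any $L^1$ integrability. Doob's $L^2$ maximal inequality would give $\mathbb{E}\vn{M_n}_T^2 \le 4\mathbb{E}[\langle M_n\rangle(T)]$, but the hypothesis supplies only tightness of $\langle M_n\rangle(T)$ as a sequence of real-valued random variables, which is strictly weaker than any uniform $L^1$ bound. Hence a distributional companion to Doob is needed, and Lenglart's inequality provides exactly this.

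Since $M_n^2$ is a nonnegative submartingale with Doob--Meyer compensator $\langle M_n\rangle$, Lenglart's inequality yields, for any $\delta,\eta>0$,
$$
\PP\!\left(\vn{M_n}_T \ge \delta\right) \;=\; \PP\!\left(\sup_{0 \le s \le T} M_n(s)^2 \ge \delta^2\right) \;\le\; \frac{\eta}{\delta^2} + \PP\!\left(\langle M_n\rangle(T) \ge \eta\right).
$$
The inequality itself is proved by localizing at the stopping time $\sigma_\eta = \inf\{t \ge 0 : \langle M_n\rangle(t) \ge \eta\}$, applying Doob's $L^2$ inequality to $M_n$ stopped at $T \wedge \sigma_\eta$ (where $\langle M_n\rangle$ stays at most $\eta$ by left-continuity, so the right-hand side of Doob is bounded by $4\eta$), and splitting $\{\vn{M_n}_T \ge \delta\}$ according to whether $\sigma_\eta \le T$.

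To conclude, fix $T > 0$ and $\epsilon > 0$. By the assumed stochastic boundedness of $\langle M_n\rangle(T)$ there exists $\eta_0$ with $\sup_{n \ge 1}\PP(\langle M_n\rangle(T) \ge \eta_0) < \epsilon/2$; then choose $\delta_0$ with $\eta_0/\delta_0^2 < \epsilon/2$. Combining these choices with the displayed bound gives $\sup_{n \ge 1}\PP(\vn{M_n}_T \ge \delta_0) < \epsilon$, which is precisely the required stochastic boundedness of $(M_n)$ in $D$. The only substantive obstacle is identifying Lenglart as the correct tool and verifying its proof; once available, the remainder is a routine $\epsilon/2 + \epsilon/2$ decoupling of the two terms on the right-hand side.
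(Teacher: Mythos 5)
The paper does not supply its own proof of this lemma: it is imported verbatim from Pang, Talreja, and Whitt (their Lemma~5.8), who deduce it directly from the Lenglart--Rebolledo inequality. Your proposal is exactly that argument, correctly reduced to the $\epsilon/2+\epsilon/2$ decoupling at the end. One small imprecision in your sketch of Lenglart's own proof: the predictable quadratic variation $\langle M_n\rangle$ is right-continuous with left limits, not left-continuous, so at the hitting time $\sigma_\eta$ it may jump strictly above $\eta$. The standard fix exploits predictability of $\langle M_n\rangle$: this makes $\sigma_\eta$ a predictable stopping time, so one applies Doob's $L^2$ inequality along an announcing sequence $\tau_m \uparrow \sigma_\eta$ with $\tau_m < \sigma_\eta$, on which $\langle M_n\rangle \le \eta$ holds by construction, and then passes to the limit in $m$. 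This does not affect your conclusion; in the application here the compensators are continuous (time integrals), so the issue is moot anyway.
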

We now prove that the predictable quadratic variations of $\hat{M}_i^{n}$ are stochastically bounded. In the case of $\hat{M}_0^{n}$ this is immediate since by \eqref{eq:trunc-qv-1} the quadratic variation is deterministic.

For $\hat{M}_1^{n}$ we refer to \eqref{eq:trunc-qv-2} and apply crude bounds to see
\begin{align*}
\left\langle \hat{M}_1^{n}\right\rangle(t) &= \frac{1}{n}\int_0^t(\hat{Q}_1^n(s)-\hat{Q}_2^n(s))ds \\
&\le \frac{1}{n}\int_0^t\hat{Q}_1^n(s)ds \\
&\le \frac{t}{n}\left(Q_1^n(0) + A(\lambda_nnt)\right).
\end{align*}
It suffices to show stochastic boundedness of each term in the sum. For $Q_1^n(0)$ this follows from assumption \eqref{eq:trunc-initial}.

For $A(\lambda_nnt)$ we note $\lambda_n \to 1$ so by the strong law of large numbers (SLLN) for Poisson processes we have
\[\frac{A(\lambda_nnt)}{n} \to e(t)\]
with probability 1, which implies stochastic boundedness, so we conclude that $\hat{M}_1^{n}$ is stochastically bounded.

For $\hat{M}_2^{n}$ we have
\begin{align*}
\left\langle \hat{M}_2^{n}\right\rangle(t) &\le \frac{t}{n}\left(Q_2^n(0) + \hat{U}_1^n(t)\right) \\
&\le \frac{t}{n}\left(Q_2^n(0) + A(\lambda_n n t)\right),
\end{align*}
and stochastic boundedness follows.

We now return to the proof of Lemma \ref{thm:fluid}:

\begin{proof}[Proof of Lemma \ref{thm:fluid}.]
We have for $i \le 2 \le k$ that
\[\hat{X}_1^n = \frac{\hat{Q}_1^n - n}{\sqrt{n}} \quad \text{ and } \quad \hat{X}_i^n = \frac{\hat{Q}_i^n}{\sqrt{n}}\]
are stochastically bounded. Therefore
\[\frac{\hat{X}_i^n}{\sqrt{n}} \Rightarrow 0 \quad \text{ in } D\quad \text{ as } n \to \infty.\]
From the definition of $\hat{X}^n$ this is equivalent to
\[\Psi_1^n = \frac{\hat{Q}_1^n}{n} \Rightarrow \omega \quad \text{ and }\quad \Psi_i^n = \frac{\hat{Q}_i^n}{n} \Rightarrow 0 \quad \text{ in } D\quad \text{ as } n \to \infty\]
for $2 \le i \le k$.
\end{proof}

\subsection{Proof of Theorem \ref{thm:trunc-result}.}
Now that we have the convergence of the martingale processes $\hat{M}_i^{n}$, we can apply the CMT to prove Theorem \ref{thm:trunc-result}.

\begin{proof}[Proof of Theorem \ref{thm:trunc-result}.]
We first show $\hat{X}^n \Rightarrow X$.

In Theorem \ref{thm:app-int}, in the pre-limit regime we set $B_n = \sqrt{n}$, $b_i = X_i^n(0)$ for $1 \le i \le k$,
\[y_1(t) = \hat{M}_0^{n}(t)-\hat{M}_1^{n}(t) - (1-\lambda_n)\sqrt{n}t,\]
and $y_i(t) = -\hat{M}_i^{n}(t)$ for $2 \le i \le k$. Equations \eqref{eq:v1-valid} and \eqref{eq:v2-valid} show that $\hat{V}_1^n$ and $\hat{V}_2^n$ are appropriately acting as $u_1$ and $u_2$ in the integral representation, so $x_i(t) = \hat{X}_i^n(t)$ for $1 \le i \le k$. For application of the CMT we need only determine the limits of $B$, $b$ and $y$. We have $B_n \to \infty$, so in the limit $u_2 = 0$.

By assumption we have
\[\hat{X}_k^n(0) \Rightarrow X_k(0),\]
so in the limiting system we let $b_i = \hat{X}_i(0)$. Next we have by \eqref{eq:lambda-scale} and \eqref{eq:mart-limit}
\begin{align*}
\hat{M}_0^{n}(t)-\hat{M}_1^{n}(t) - (1-\lambda_n)\sqrt{n}t &\Rightarrow W_1(t) - W_2(t) - \beta t \\
&\de \sqrt{2}W(t) - \beta t,
\end{align*}
where $W$ is a standard Brownian motion and $\de$ indicates equivalence in distribution. Another application of \eqref{eq:mart-limit} implies
\[-\hat{M}_i^{n} \Rightarrow 0\]
for $2 \le i \le k$ so in the limiting system we have $y_1(t) = \sqrt{2}W(t) -\beta t$ and $y_i(t) = 0$, for $2 \le i \le k$.

The CMT then implies that for $1 \le i \le k$, $\hat{X}_i^n \Rightarrow X_i$ in $D$ as $n \to \infty$ where $X_i$ is described by \eqref{eq:trunc-limit-1}-\eqref{eq:trunc-reflection}. We can augment the truncated system with $\hat{X}^n_i(t) = 0$ for $i > k$ and note that $0 = \hat{X}^n_i \Rightarrow X_i = 0$ for such $i$. Therefore $\hat{X}^n \Rightarrow X$.

Now we consider the untruncated system described by $X^n$. By an argument like that Section \ref{sec:mart-rep}, we have
\begin{align}
Q^n_1(t) &= Q^n_1(0) + A\left(\lambda_n n t\right) - D_1\left(\int_0^t\left(Q^n_1(s) - Q^n_2(s)\right)ds\right) - U^n_1(t),\label{eq:full-q-1}\\
Q^n_i(t) &= Q^n_i(0) + U^n_{i-1}(t) - D_i\left(\int_0^t\left(Q^n_i(s) -Q^n_{i+1}(s)\right) ds\right) - U^n_i(t), \quad i \ge 2, \label{eq:full-q-i}
\end{align}
where $U^n_i(t)$ is the number of arrivals in $[0,t]$ when every server has at least $i$ customers. Note that we introduce extra rate one Poisson processes $D_i$ for $i > k$.

Now define
\begin{equation}\label{eq:stop-def}
t_n^* = \inf\{t \ge 0 : Q^n_2(t) = n\}
\end{equation}
and note that for $t \in [0,t_n^*)$ we have $U_i^n(t) = 0$ for $i \ge 2$. This, along with $Q^n_i(0) = 0$ for $i >k$, implies that for such $t$, the system \eqref{eq:full-q-1}-\eqref{eq:full-q-i} becomes
\begin{align*}
Q^n_1(t) &= Q^n_1(0) + A\left(\lambda_n n t\right) - D_1\left(\int_0^t\left(Q^n_1(s) - Q^n_2(s)\right)ds\right) - U^n_1(t),\\
Q^n_2(t) &= Q^n_2(0) + U^n_1(t) - D_2\left(\int_0^t\left(Q^n_2(s) - Q^n_3(s)\right) ds\right) - U^n_2(t), \\
Q^n_i(t) &= Q^n_i(0) - D_i\left(\int_0^t\left(Q^n_i(s) - Q^n_{i+1}(s)\right) ds\right), \\
Q^n_k(t) &= Q^n_k(0) - D_k\left(\int_0^tQ^n_k(s)ds\right), 
\end{align*}
which precisely matches \eqref{eq:app-pois-1}-\eqref{eq:app-pois-4}. Thus for $t \in [0,t_n^*)$, $X^n(t)$ and $\hat{X}^n(t)$ are identical.

It only remains to show for all $t \ge 0$ that $\PP(t_n^* \le t) \to 0$ as $n \to \infty$. Because the systems are identical up to time $t_n^*$, we can replace $Q^n_2(t)$ in \eqref{eq:stop-def} with $\hat{Q}^n_2(t)$ to see
\begin{align*}
\PP(t_n^* \le t) &=  \PP\left(\sup_{0 \le s \le t}\hat{Q}^n_2(s) \ge n\right) = \PP\left(\sup_{0 \le s \le t}\hat{X}^n_2(s) \ge \sqrt{n}\right) \\
&\le \PP\left(\sup_{0 \le s \le t}\hat{X}^n_2(s) \ge C\right)
\end{align*}
for constant $0 < C \le \sqrt{n}$. By the weak convergence $\hat{X}_2^n \Rightarrow X_2$ and the fact that $\left\{\sup_{0 \le s \le t}\hat{X}^n_2(s) \ge C\right\}$ is closed, we have
\begin{align*}
\limsup_n \PP\left(\sup_{0 \le s \le t}\hat{X}^n_2(s) \ge C\right) &\le \PP\left(\sup_{0 \le s \le t}X_2(s) \ge C\right)
\end{align*}
By continuity of probability we have
\[\lim_{C \to \infty}\PP\left(\sup_{0 \le s \le t}X_2(s) \ge C\right) = \PP\left(\sup_{0 \le s \le t}X_2(s) = \infty \right) = 0.\]
We therefore have
\begin{align*}
\limsup_n\PP(t_n^* \le t) &\le \lim_{C \to \infty}\limsup_n\PP\left(\sup_{0 \le s \le t}\hat{X}^n_2(s) \ge C\right) \le \lim_{C \to \infty}\PP\left(\sup_{0 \le s \le t}X_2(s) \ge C\right) = 0
\end{align*}
and thus $\PP(t_n^* \le t) \to 0$ as $n \to \infty$. We conclude $X^n \Rightarrow X$.
\end{proof}

\section{Waiting time.}\label{sec:wait}
An important performance measure of a queueing system is the expected time that customers will have to wait before entering service. In the $M/M/n$ system with a single queue in the Halfin-Whitt regime, the expected waiting time is of the order $\bo{1/\sqrt{n}}$. We will now show that the $M/M/n$-JSQ system has the same order of aggregate waiting time in the transient regime, and thus seems to have a minimal loss of efficiency as measured by waiting time.

Notice that our representation of the system allows us to directly consider the total time any customers in the system will wait. In particular, the instantaneous number of customers waiting to be served at a given time $t$ is precisely $\sum_{i \ge 2} Q_i^n(t)$. This quantity can be integrated over time to compute the aggregate waiting time in the system. With this insight, we prove the following:

\begin{theorem}\label{thm:ag-wait}
The aggregate waiting time of customers who arrived over the time period $[0,t]$, which we denote $Z_t^n$, satisfies
\begin{equation}\label{eq:agg-wait}
	\lim_{C \to \infty}\limsup_n\PP(Z_t^n \ge C\sqrt{n}) = 0.
\end{equation} 
Since the total number of arrivals to the system in this time is $\bth{n}$ with high probability, the waiting time per arrival is $\bo{1/\sqrt{n}}$ with high probability.
\end{theorem}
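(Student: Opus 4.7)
My plan is to reduce \eqref{eq:agg-wait} to a uniform control on the number of waiting customers. As the paragraph preceding the theorem makes explicit, the instantaneous number of customers waiting at time $s$ is exactly $\sum_{i \ge 2} Q_i^n(s)$, and this is the rate at which aggregate waiting time accumulates, so
\[
Z_t^n = \int_0^t \sum_{i \ge 2} Q_i^n(s)\, ds \le t \cdot \sup_{s \le t} \sum_{i \ge 2} Q_i^n(s).
\]
It therefore suffices to establish $\sup_{s \le t}\sum_{i \ge 2} Q_i^n(s) = \bo{\sqrt n}$ with high probability.

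The first step is to couple with the truncated system. In the proof of Theorem \ref{thm:trunc-result} the stopping time $t_n^* = \inf\{s : Q_2^n(s) = n\}$ was introduced, with $Q^n(s) = \hat Q^n(s)$ for all $s < t_n^*$ and $\PP(t_n^* \le t) \to 0$. On the event $\{t_n^* > t\}$, since $Q_i^n(0) = 0$ for $i > k$ and the truncated dynamics in \eqref{eq:app-pois-3}-\eqref{eq:app-pois-4} never create queues of length larger than $k$, we have $Q_i^n(s) = \hat Q_i^n(s) = 0$ for $i > k$ and $s \in [0,t]$; hence the supremum in question reduces to $\sup_{s \le t}\sum_{i=2}^k \hat Q_i^n(s)$.

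Next I would bound each $\hat Q_i^n$ in that sum. For $i = 2$, $\hat Q_2^n = \sqrt n\, \hat X_2^n$ and the stochastic boundedness of $\hat X_2^n$ in $D$ follows from Lemma \ref{thm:stoch-bound} together with the stochastic boundedness of the scaled martingales $\hat M_i^n$ established in Section \ref{sec:fluid}, so $\sup_{s \le t}\hat Q_2^n(s) = \bo{\sqrt n}$ with high probability. For $3 \le i \le k$, \eqref{eq:app-pois-3}-\eqref{eq:app-pois-4} show that $\hat Q_i^n$ is monotonically nonincreasing (only deaths, no births), so $\hat Q_i^n(s) \le \hat Q_i^n(0) = \sqrt n\, X_i^n(0) = \bo{\sqrt n}$ with high probability by \eqref{eq:trunc-initial}. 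Summing the $k-1$ bounds and combining with $\PP(t_n^* \le t) \to 0$ yields \eqref{eq:agg-wait}. The per-arrival claim then follows because the SLLN for Poisson processes and $\lambda_n \to 1$ give $A(\lambda_n n t)/n \to t$ almost surely, so the number of arrivals in $[0,t]$ is $\bth{n}$ with high probability and the ratio $Z_t^n/A(\lambda_n n t)$ is $\bo{1/\sqrt n}$.

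I do not anticipate a serious technical obstacle: every ingredient (the coupling and $\PP(t_n^* \le t) \to 0$, the stochastic boundedness of $\hat X_2^n$, and the monotonicity of the higher coordinates in the truncated system) is already in hand, and the main content of the proof is simply to assemble them. The only minor conceptual point worth flagging is the interpretation of $Z_t^n$: if one reads ``aggregate waiting time of customers arriving in $[0,t]$'' literally as $\sum_i W_i \1\{i\text{ arrives in }[0,t]\}$, then an additional boundary term accounting for wait accrued after $t$ must be added, but this is bounded by the $\bo{\sqrt n}$ customers still waiting at time $t$ each owing at most $k$ independent unit-rate exponential services, so the $\bo{\sqrt n}$ scaling persists.
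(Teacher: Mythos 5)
Your proof is correct, and it takes a genuinely different route from the paper's. The common starting point is the inequality $Z_t^n \le t \sup_{s \le t} \sum_{i \ge 2} Q_i^n(s)$. From there, the paper stays at the level of the scaled {\em untruncated} process: it invokes the weak convergence $X_i^n \Rightarrow X_i$ from Theorem \ref{thm:trunc-result}, applies the Portmanteau theorem to the closed set $\{t\sup_{s\le t}\sum_{i\ge 2} X_i^n(s) \ge C\}$ to pass to the limit process $X$, and then uses continuity of probability together with $X_i = 0$ for $i > k$ to drive the tail to zero. You instead bypass Portmanteau entirely: you couple back to the truncated process via $t_n^*$ and $\PP(t_n^*\le t)\to 0$, observe that on $\{t_n^* > t\}$ the sum is finite (only $i \le k$ contribute) because $\hat Q_i^n \equiv 0$ for $i > k$, and then bound $\hat Q_2^n$ by Lemma \ref{thm:stoch-bound} and $\hat Q_i^n$ for $3\le i\le k$ by the monotonicity of the truncated dynamics. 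One advantage of your route is that it makes explicit how the potentially infinite sum $\sum_{i\ge 2}$ is controlled, which the paper's one-line Portmanteau step leaves implicit (the functional $\mathbf{x}\mapsto \sup_s\sum_{i\ge2}x_i(s)$ is not continuous in the product topology on $D^\infty$, so some reasoning of the kind you give is really needed to justify that step); the cost is some extra bookkeeping that the paper avoids. The per-arrival conclusion is the same in both, via the SLLN for the arrival process. Your closing remark about the two possible readings of ``aggregate waiting time of customers who arrived in $[0,t]$'' is a fair and worthwhile observation that the paper does not address, and your argument that the discrepancy is itself $\bo{\sqrt n}$ is sound.
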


\begin{proof}
As noted above, the aggregate waiting time over the period $[0,t]$ is
\[Z^n_t = \int_0^t\sum_{i \ge 2}Q_i^n(s)ds.\]
We consider a scaled version $Y^n_t = Z^n_t / \sqrt{n}$. Note that $Y^n_t \le t  \sup_{0 \le s \le t}\sum_{i \ge 2}X_i^n(s)$, and thus
\begin{align*}
\PP\left(Y^n_t \ge C\right) &\le \PP\left( t  \sup_{0 \le s \le t}\sum_{i \ge 2}X_i^n(s) \ge C \right) 
\end{align*}
for any constant $C > 0$. By the weak convergence $X_i^n \Rightarrow X_i$ and the fact that $\{t  \sup_{0 \le s \le t}\sum_{i \ge 2}X_i^n(s) \ge C\}$ is closed, we have
\[\limsup_n \PP\left( t  \sup_{0 \le s \le t}\sum_{i \ge 2}X_i^n(s) \ge C \right) \le \PP\left( t  \sup_{0 \le s \le t}\sum_{i \ge 2}X_i(s) \ge C \right)\]
By continuity of probability and $X_i = 0$ for $i > k$ we have
\begin{align*}
\lim_{C \to \infty} \PP\left( t \sup_{0 \le s \le t}\sum_{i \ge 2}X_i(s) \ge C \right) = 0.
\end{align*}
Therefore we conclude
\[\lim_{C \to \infty}\limsup_n \PP\left(Y^n_t \ge C\right) \le \lim_{C \to \infty} \PP\left( t \sup_{0 \le s \le t}\sum_{i \ge 2}X_i(s) \ge C \right) = 0,\]
which proves \eqref{eq:agg-wait}. We note that this implies $Y^n_t = \bo{1}$ with high probability, and thus $Z^n_t = \bo{\sqrt{n}}$ with high probability.

Because customers arrive according to a Poisson process with rate $\lambda_n n = \bth{n}$, this implies the waiting time per arrival is $\bo{\sqrt{n}/n} = \bo{1/\sqrt{n}}$ with high probability, completing the proof.
\end{proof}

Theorem \ref{thm:ag-wait} does not directly tell us anything about the distribution of the waiting time. We can see from considering the system that this waiting time is distributed in a qualitatively different way than the standard $M/M/n$ system. Customers immediately enter service if there are any idle servers and otherwise wait a constant order amount of time for the previous customer in their queue to finish service. Because the aggregate waiting time is of the order $\bo{\sqrt{n}}$ and and any arriving customers who wait at all incur a constant order waiting time, the total number of customers who have to wait is also $\bo{\sqrt{n}}$. As noted above, the total number of arriving customers is order $n$, so the fraction of customers who have to wait is of the order $\bo{1/\sqrt{n}}$.

\section{Open questions.}\label{sec:discuss}
Theorem \ref{thm:trunc-result} proves that the behavior of the $M/M/n$-JSQ system in the Halfin-Whitt regime is best understood on the order of $\bo{\sqrt{n}}$. In particular, the numbers of idle servers and waiting customers will both be $\bo{\sqrt{n}}$. If long queues are initially present, they will empty in fixed time, and no additional long queues will be created.

Significant questions remain about the steady state behavior of our system. In particular, we do not characterize the distribution of the steady state of the limiting system or show that the steady state of the $n$-th system converges to the steady state of the limiting diffusion process (interchange of limits).


Finally it is always of interest to analyze our system for general interarrival and, especially, general service times distribution. We conjecture that the qualitative behavior
established in this paper in the transient domain and the conjectures above regarding the steady-state behavior and the interchange of steady-state limits remain true in this
case as well. 

\section*{Acknowledgments.}
This work was supported by NSF grant CMMI-1335155.

\bibliographystyle{plain}
\bibliography{JSQ}

\end{document}